\theoremstyle{plain}
\newtheorem{theorem}{Theorem}
\newtheorem{lemma}{Lemma}
\newtheorem{proposition}{Proposition}
\theoremstyle{definition}
\newtheorem{definition}{Definition}
\theoremstyle{remark}
\newtheorem{remark}{Remark}
\numberwithin{equation}{section}
\newcommand{\bfn}{{\bf{n}}}
\newcommand{\bfe}{{\bf{e}}}
\newcommand{\bfu}{{\bf{u}}}
\newcommand{\bfw}{{\bf{w}}}
\newcommand{\bfv}{\bf{v}}
\newcommand{\bfnu}{\boldsymbol{\nu}}
\newcommand{\bftheta}{{\boldsymbol{\theta}}}
\newcommand{\bfTheta}{{\boldsymbol{\Theta}}}
\newcommand{{\bfzero}}{\boldsymbol{0}}
\newcommand{\bfnabla}{\boldsymbol{\nabla}}
\newcommand{\bfxi}{\boldsymbol{\xi}}
\newcommand{\bfPsi}{\boldsymbol{\Psi}}
\newcommand{\bfPhi}{\boldsymbol{\Phi}}
\newcommand{\bfchi}{\boldsymbol{\chi}}
\newcommand{\bfphi}{\boldsymbol{\phi}}
\newcommand{\qp}{\overset{2-{\bftheta}}{\rightharpoonup}}
\newcommand{\xoe}{\tfrac{x}{\ep}}
\newcommand{\RR}{\mathbb{R}} 
\newcommand{\NN}{\mathbb{N}}
\newcommand{\CC}{\mathbb{C}}
\newcommand{\ZZ}{\mathbb{Z}}
\renewcommand{\exp}{\mathrm{exp}}
\newcommand{\V}{\mathcal{V}}
\newcommand{\I}{\mathcal{I}}
\newcommand{\T}{\mathcal{T}}
\renewcommand{\i}{\rm i}
\newcommand\ep{\varepsilon}
\title{Quasi-periodic two-scale homogenisation and effective spatial dispersion in high-contrast media}
\author{Shane Cooper}
\begin{document}
\maketitle

\begin{abstract}
The convergence of spectra via two-scale convergence for double-porosity models is well known. A crucial assumption in these works is that the stiff component of the body forms a connected set. We show that under a relaxation of this assumption the (periodic) two-scale limit of the operator is insufficient to capture the full asymptotic spectral properties of high-contrast periodic media. Asymptotically, waves of all periods (or quasi-momenta) are shown to persist and an appropriate extension of the notion of  two-scale convergence is introduced. As a result, homogenised limit equations with none trivially quasi-momentum dependence are found as resolvent limits of the original operator family. This results in asymptotic spectral behaviour with a rich dependence on quasimomenta.
\end{abstract}

\section{Introduction}
\hspace{10pt} The model problem to study  time-harmonic waves, with frequency $\omega$,  in  media with microstructure is
\begin{equation*}
-{\rm div} \big( a_\ep( \xoe) \bfnabla u  \big) = \omega^2  u \qquad \text{in $\Omega$}
\end{equation*}
 where the wave $u$  represents the information being propagated, such as pressure in acoustics, deformation in elasticity or electromagnetic fields in electromagnetism\footnote{In elasticity and electromagnetism the wave equation describes certain polarised waves: e.g. Shear polarised wave in elasticity or Transverse Electric and Transverse Magnetic polarised waves for the Maxwell system.}. 
  The microstructured nature of the media is characterised by periodic coefficients $a_\ep\ $\footnote{The implied non-trivial dependence of $a$ on $\ep$ is deliberate and, as we shall see, important.} 
:
\begin{equation*}
a_\ep(y) = \left\{ \begin{matrix}
a_{1\ep}(y), & y \in Q_1, \\
a_{0\ep}(y), & y \in Q_0,
\end{matrix} \right.
\end{equation*}
where $a_{0\ep}$, $a_{1\ep}$  are (the square-root of) the wave speeds of the individual constitutive material components,
 see figure \ref{fig:compositedomain}.
\begin{figure}
\centering
\includegraphics[width=0.7\linewidth]{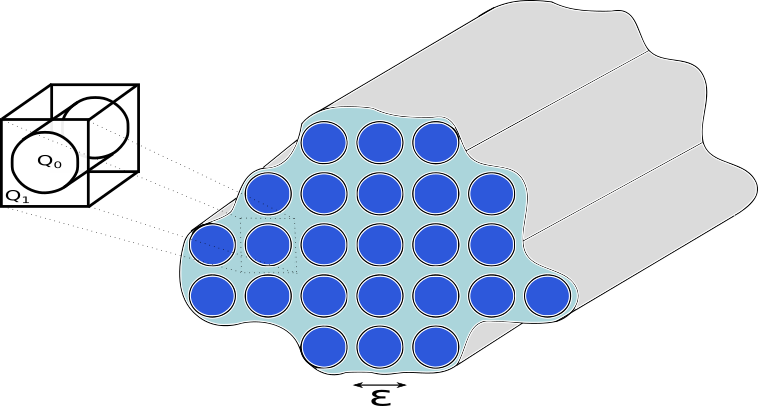}
\caption{A typical schematic of some three-dimensional composite media with period microstructure.}
\label{fig:compositedomain}
\end{figure}
The parameter $\ep$ represents the ratio between the size of the microstructure and the observable length scale, and is  typically taken to be small. From the point of view of applications, it is important to study the asymptotic behaviour of these waves in the limit of vanishing $\ep$.

\hspace{10pt} A classical approximation, provided by the homogenisation theorem\footnote{Also called the long-wavelength or quasi-static approximation depending on the community.}, states that for fixed frequency $\omega$ the 
 microstructured media 
can be approximated by 
an `effective' homogeneous media whose wave speed $a^{\rm hom}$ is constant and determined directly from the `local periodic' behaviour of the problem. The intuition behind why the homogenisation theorem holds is that the `wavelength' of $u$ is long with respect to the microstructure: variations in $u$ appear over much longer distances than the media's period. Mathematically, this is ensured by assuming that  $a_\ep$ are taken to be uniformly bounded and elliptic with respect to $\ep$, for example
\[
a_{1\ep} = a_1, \quad a_{0\ep} = a_0, \qquad \text{for bounded elliptic $a_0,a_1$.}
\]

\hspace{10pt} 
It has been known for some time now that interesting effects appear when the above elliptic conditions are not uniform. This happens for example in so-called high-contrast media.  In the context of waves, high-contrast media of particular interest are the so-called double porosity models 
which admit the `critical' scaling: 
\[
a_{1\ep} = a_1, \quad a_{0\ep} = \ep^2 a_0.
\]
Physically, this  critical scaling corresponds to the wavelength of $u$ remaining `long' within the media $Q_1$ but in media  $Q_0$ the wavelength is at the `resonant' scale, i.e. of the same order as the size of the microstructure. Thus violating the underlying intuition for the long-wavelength approximation. 

\hspace{10pt} The mathematical analysis of high-contrast problems has given rise to rigorous descriptions of various scale-interaction phenomena  such as memory effects and other non-local effects (e.g. \cite{FeKh,Sa,BeGr,Br,CaMi,CaSe,ChSmZh}). 
Within the context of  wave propagation, an important feature of high-contrast problems is that they contain spectral gaps (cf. \cite{HeLi,Zh1,Zh2}):  frequencies at which no wave can propagate through the underlying medium. Such gaps are important from the point of view of wave-guiding applications such as photonic crystal fibres. 

An important initial work in the study of the spectrum of high-contrast elliptic operators  was undertaken by V. V. Zhikov \cite{Zh1,Zh2}.
Therein,
the homogenisation theory for 
double porosity-type problems was developed within the framework of the so-called two-scale convergence of G. Nguetseng-G. Allaire \cite{Ng,All}. Using this theory, Zhikov derived two-scale limit spectral equations that contain a non-trivial coupling between micro- and macro-scales.
Such a coupling leads to an eigenvalue problem with a highly non-linear spectral dependence, described by a function $\beta$.
The convergence of spectra (in the appropriate sense) was proved and, by doing  so, demonstrates that this $\beta$ function provides an explicit description of the asymptotic structure of the spectrum. 
Such an explicit description of the limit spectral behaviour via two-scale homogenisation has made way for mathematical studies of high-contrast media as wave-guides:
in  \cite{KaSm1}   using multi-scale asymptotics and supplemented with analysis based on two-scale convergence in \cite{cherd}. 
	
\hspace{10pt} Moreover, the Zhikov $\beta$ function was later independently discovered by G. Bouchitt\'{e} and D. Felbacq \cite{BoFe} in the specific context of TM-polarised electromagnetic wave propagation in a dilute dielectric two-dimensional photonic crystal fibre; therein the authors made the interesting interpretation of the $\beta$-function playing the role of effective negative magnetism. 
 Later, in the context of elasticity, a matrix analogue of the $\beta$ function is derived and plays the role of frequency-dependent effective density \cite{Av1,Av2,ZhPa}. 
%
Such works demonstrate that the unusual phenomena observed in high-contrast media can be described by non-standard constitutive laws provided via two-scale homogenisation.

\hspace{10pt} The idea that high-contrast media can result in the appearance of non-standard constitutive laws and give rise to composite media with complex wave phenomena near micro-resonances has prompted a recent energetic pursuit of 
such laws in the contexts of elasticity (e.g. \cite{LiChSh}) and electromagnetism (e.g. \cite{KoSh,ChCo2}). Applications can be found in areas such as cloaking (e.g. \cite{MiBrWi,MiWi}).  It was shown in the work of V. P. Smyshlyaev \cite{SmMoM}, building on related ideas in \cite{ChSmZh}, that the two-scale homogenised limit of various anisotropic elastic media  contain not only the temporal non-locality (as described by the Zhikov $\beta$ function) but also exhibit spatial non-locality. The presence of which leads to the phenomena of `directional' localisation: the number of admissible propagating wave modes depends not only on the frequency but on the direction of propagation. Such a feature is important for cloaking applications. These motivating works have
led to recent systematic study containing rigorous asymptotic and spectral analysis of general mathematical constructions containing `high-contrasts' \cite{IVKVPS}. Analysis based on the work \cite{IVKVPS} has led to the demonstration that the two-scale convergence is insufficient to fully study the spectrum of general high-contrast problems, see \cite[Chapter 5]{Cothesis},\cite{ChCo1dhc}. The reason for this inconsistency is due to the presence of quasi-periodic micro-oscillations that
persist at leading-order  in general high-contrast media.

{\hspace{10pt}} In this work we appropriately develop homogenisation theory to study quasi-periodic micro-oscillations. This is achieved by extending the two-scale convergence framework to admit such
  oscillations.  We  explain the (lack of the) role these  micro-oscillations  in the numerous previous works on high-contrast 
 problems. Then, we  apply this  theory  in the  spectral analysis of  a novel class of  high-contrast media. In particular, we shall show that by relaxing the geometric assumptions in the double-porosity model
leads to multi-scale homogenised models that contain a new feature: the effective wave speed depends on the quasi-momenta in a highly discontinuous fashion. Specifically,  the non-standard constitutive equations for such high-contrast media exhibit spatial dispersion. The presence of this novel feature is related to the contribution of the quasi-periodic waves on the microscale.

\subsection*{Notations}
We end the introduction with some words on the  notation used in this article. 

Vectors and vector-valued functions are represented by lower-case boldface symbols with the exception of the co-ordinate points.  $\{ \bfe_1, \bfe_2, \bfe_3 \}$ denotes the Euclidean basis in $\RR^3$. For a vector $\bfu \in \RR^3$, we denote by $u_i$ its component with respect to $\bfe_i$, and write
$$
\bfu = (u_1,u_2,u_3) = \sum_{i=1}^3 u_i \bfe_i.
$$

Points in $\RR^3$ will be denoted by the symbol $x$ and points in the unit cell $\square := [0,1)^3$ will be denoted by $y$. The notation $\partial_{i}$ will be used to denote partial differentiation with respect to the $i$-th coordinate variable, and we shall replace the suffix $i$ with $x_i$ or $y_i$ when we wish to emphasis the macroscopic or microscopic variable. Similarly, the notion ${\rm div}_x $, ${\rm div}_y$, $\bfnabla_x$, and $\bfnabla_y$, are used for the divergence  or gradient of a  function in terms of $x$ or $y$. 

Throughout $\Omega$ is a domain in $\RR^d$, $d \ge 1$, $\square : = (0,1)^d$ and ${\bftheta} \in [0,2\pi)^d$. All of the functions, even if real-valued, are considered to take values in the complex field.

 The space  $C^\infty_{\#}(\square)$ denotes the usual space of smooth $\square$-periodic functions.  Whereas, $C^\infty_{\bftheta}(\square)$ shall denote the space of smooth functions $\varphi(y)$ whose functions and derivatives are ${\bftheta}$-quasi-periodic  with respect to $y$: $\varphi(y + \bfe_j) = \exp(\i {\bftheta}_j) \varphi(y)$ for each $y \in \square$ and each Euclidean basis vector $\bfe_j$, $j =1,\ldots,d$.  Equivalently,
	\[
	C^\infty_{\bftheta}(\square) = \{ \phi  \, | \, \phi = e^{\i \bftheta \cdot y} \psi , \psi \in C^\infty_{\#}(\square) \}.
	\]
Note that $C^\infty_{\bfzero}(\square) = C^\infty_{\#}(\square)$ and use the latter to avoid confusion with the notation for the space compactly supported smooth functions.

The Sobolev space $H^1_{\#}(\square)$ is the usual Sobolev space of $H^1$ $\square$-periodic functions. Whereas $H^1_\bftheta(\square)$
is defined as the closure of $C^\infty_\bftheta(\square)$ with respect to the $H^1$ norm,
or equivalently as
\begin{equation}
\label{h1thetadef}
H^1_\bftheta(\square) : = \{ e^{\i \bftheta \cdot y} u_{\#} \, | \, u_{\#} \in H^1_{\#}(\square) \}.
\end{equation}
Also, we note $H^1_{\bfzero}(\square) = H^1_{\#}(\square)$ and,  in this situation, we use the latter to avoid confusion with the Sobolev space of zero trace $H^1$ functions.

For subsets $\{ S_\ep\}_{\ep}$ and $S$ of $\RR^d$ we say that $S_\ep$ converges to $S$ in the Hausdorff sense if the following conditions hold:
\begin{enumerate}
	\item{ For every $\lambda_\ep \in S_\ep$ such that $\lambda_\ep$ converges to some $\lambda_0$, then   $\lambda_0 \in S$.
	}
	\item{ For every $\lambda_0 \in S$ there exists$\lambda_\ep \in S_\ep$ such that $\lim_\ep \lambda_\ep = \lambda_0$.
	}
\end{enumerate}
We shall use the notation $\lim_\ep S_\ep = S$ when a sequence of sets $S_\ep$ Hausdorff converges to $S$.

The Einstein summation convention will not be used in this article, that is we do not sum with respect to repeated indices.
\section{Quasi-periodic two-scale convergence}
\label{sec:qpcon}
In this section we introduce an appropriate notion of convergence that will account for the presence of microscopic oscillations that are quasi-periodic in nature. This convergence will turn out to be a natural extension of the standard (periodic) two-scale convergence introduced by G. Nguetseng\cite{Ng}-G. Allaire \cite{All}. In particular, we aim to use this extended notion of two-scale convergence to study the spectral properties of operator families in homogenisation theory in a similar vein to that first introduced by V. V. Zhikov \cite{Zh1,Zh2}.

\subsection{Motivation}\label{mot}
\hspace{10pt}
We shall motivate the notion of quasi-periodic two-scale convergence here. This motivation is based on the principle goal of 
 characterising the spectral asymptotics of high-contrast elliptic operators.

\hspace{10pt}Let $\ep$ be a sequence of positive real numbers with limit zero. Consider the differential operator $A_\ep : \mathcal{D}(A_\ep) \subset L^2(\mathbb{R}^d) \rightarrow L^2(\mathbb{R}^d)$ whose action is given by
\begin{equation*}
A_\ep u: = -{\rm div} \big( a_\ep( \tfrac{\cdot}{\ep}) \bfnabla u  \big)  
\end{equation*}
and domain $\mathcal{D}(A_\ep)$ consists of $u$ for which $A_\ep u \in L^2(\mathbb{R}^d)$. Here $a_\ep$ are $\square$-periodic measurable functions,
 that are bounded and elliptic: $\exists \nu_1, \nu_2 >0$ such that
\[
\nu_1 I \le \ a_\ep \le \nu_2 I.
\]
In this article, we focus on $a_\ep$ that are uniformly bounded, i.e. $\nu_2$ is independent of $\ep$, but $a_\ep$ may degenerate in the sense that $\nu_1 = \nu_1(\ep)$ with $\lim_\ep \nu_1(\ep) \ge 0$. We are interested in analysing the structure of the spectrum $\sigma(A_\ep)$ of $A_\ep$ in the limit of $\ep$. 
The strategy of the study is to establish the existence of some operator $A_0$ such that $\sigma(A_\ep)$ Hausdorff converges to $\sigma(A_0)$; i.e. the following conditions hold:
\begin{enumerate}
	\item{ For every $\lambda_\ep \in \sigma(A_\ep)$ such that $\lambda_\ep$ converges to some $\lambda_0$ we deduce that $\lambda_0 \in \sigma(A_0)$.
	}
\item{ For every $\lambda_0 \in \sigma(A_0)$ we find $\lambda_\ep \in \sigma(A_\ep)$ such that $\lim_\ep \lambda_\ep = \lambda_0$.
}
\end{enumerate}

A crucial question is how to determine the operator $A_0$. For example, in classical and semi-classical high-contrast problems, $A_0$ turns out the be the strong two-scale resolvent homogenised limit of $A_\ep$, cf. \cite{Zh1,Zh2,Co13}.  To develop some intuition on what to expect in the general case, let us recall an important result from the spectral theory of elliptic  operators with $\ep \square$-periodic coefficients: the Floquet-Bloch decomposition (see for example \cite{Ku} for more details). This result states that the following characterisation of $\sigma(A_\ep)$ holds:
\[
\sigma(A_\ep) = \bigcup_{\bfTheta \in \left[0,\tfrac{2\pi}{\ep}\right)^d} \sigma(A_\ep(\bfTheta))
\]
where $A_\ep(\bfTheta) : \mathcal{D}(A_\ep(\bfTheta)) \subset L^2(\ep\square) \rightarrow L^2(\ep\square)$, describe a  family of densely defined self-adjoint operators with compact resolvent given by the action that $A_\ep(\bfTheta) u = f \in L^2(\square)$ if $u \in H^1_{\#}(\ep\square)$ solves
\begin{equation*}
\begin{aligned}
-{\rm div} \big( a_\ep( \tfrac{x}{\ep}) \bfnabla   e^{\i \bfTheta \cdot x } u  \big)  &=  e^{\i \bfTheta \cdot x } f(x), \quad x\in \ep \square. 
\end{aligned} 
\end{equation*}
Taking the above into consideration we see that $\lambda_\ep \in \sigma(A_\ep)$ if, and only if, there exists $\bfTheta_\ep \in  \left[0,\tfrac{2\pi}{\ep}\right)^d$ and non-trivial $u_\ep \in H^1_{\#}(\ep \square)$ such that
\begin{equation*}
 \begin{aligned}
-{\rm div} \big( a_\ep( \tfrac{x}{\ep}) \bfnabla e^{\i \bfTheta \cdot x }u_\ep  \big)  &= \lambda_\ep e^{\i \bfTheta \cdot x } u_\ep(x), \quad x\in \ep \square.
\end{aligned} 
\end{equation*}

By a change of variables $y = x / \ep$ and $\bftheta =  \ep \bfTheta$, we see that $w_\ep(y) : = e^{\i {\bftheta} \cdot  y} u_\ep (\ep y)$ solves
\begin{equation}\label{blochmotp}
 \begin{aligned}
-{\rm div} \big( \ep^{-2} a_\ep( y) \bfnabla w_\ep  \big)  &= \lambda_\ep w_\ep(y), \quad y\in \square, 
\end{aligned} 
\end{equation}
and $w_\ep$ belongs to the space of $H^1(\square)$ functions that satisfy the condition
\[
w_\ep(y + z) = e^{\i \bftheta \cdot z} w_\ep(y), \qquad y\in  \square, \ z \in \ZZ^d,
\]
{for some $\bftheta \in [0,2\pi)^d$}. This condition is typically referred to as the Bloch or quasi-periodic condition and $\bftheta$ is known as the quasi-momentum. Note that $\bftheta = \bf0$ is the usual periodicity condition. 
The Sobolev space of $H^1(\square)$ $\bftheta$-quasi-periodic functions
coincides with  $H^1_\bftheta(\square)$ which, we recall, 
is be defined as
	\begin{equation*}
	H^1_\bftheta(\square) : = \{ e^{\i \bftheta \cdot y} u_{\#} \, | \, u_{\#} \in H^1_{\#}(\square) \}.
	\end{equation*}

 The general principle to observe here is that if we wish to study the asymptotic behaviour of the spectrum $\sigma(A_\ep)$ we need to keep track of the  eigenfunctions that are  $\bftheta$-quasi-periodic on micro-scale $y := x / \ep$, {\it for all} $\bftheta \in [0,2\pi)^d$. The notion of quasi-periodic two-scale convergence, introduced in Section \ref{s:qpcon2} below,  performs such a task. 

\hspace{10pt} We note here that in the case of the whole space, as discussed above, one need not refer to a notion of quasi-periodic two-scale convergence to study the asymptotics of the spectrum; one may study the norm-resolvent limits of the operators $A_\ep(\theta)$ to study spectral asymptotics, cf. \cite{HeLi} where the point-wise (in $\theta$) limits or \cite{ChCo} where the uniform limits were considered in the double-porosity setting. That being said, for boundary-value problems, the Bloch decomposition does not hold; nevertheless, the whole space (or Bloch) spectrum is expected to contribute asymptotically to the bounded domain spectrum  and the precluding discussion is still relevant. It is this setting that the quasi-periodic two-scale convergence will be particularly useful.

\hspace{10pt}Finally, we comment that the above discussion leads to the natural question: why in previous cases considered was it sufficient to consider the standard (periodic) two-scale limit of $A_\ep$ to ensure spectral convergence? Or put another way, when in the asymptotic limit of $\ep$ do we need consider {\it all} quasi-periodicity and not just $\bftheta = \bf0$. This shall be explained in Section \ref{sec:comparison}.

\subsection{Definition and basic properties}\label{s:qpcon2}
This section is dedicated to the introduction of the notion quasi-periodic two-scale convergence and an exposition of results that are 
 appropriate to homogenisation theory.

Recall $C^\infty_{\#}(\square)$ denotes the usual space of smooth $\square$-periodic functions and
\[
C^\infty_{\bftheta}(\square) = \{ \phi  \, | \, \phi = e^{\i \bftheta \cdot y} \psi , \psi \in C^\infty_{\#}(\square) \}.
\] The following {\bf mean-value property} will be important:
 For every $\varphi \in C^\infty_{\bftheta}(\square)$ and every $\phi \in C^\infty_0(\Omega)$  the following convergence
\begin{equation}
\label{mvprop}
\lim_{\ep \rightarrow 0}\int_\Omega  \left\vert \phi(x) \varphi\left( \tfrac{x}{\ep}\right)  \right\vert^2  \ \mathrm{d}x  = \int_{\Omega}\int_{\square}{ \left\vert \phi(x) \varphi( y) \right\vert^2 }, 
\end{equation}
holds. 
This fact follows  by noting that the assertion holds for elements in $C^\infty_{\#}(\square)$, see for example \cite[Lemma 1.3]{All}, and observing that multiplication by $\exp(-\i {\bftheta} \cdot y)$ defines an isomorphism between $C^\infty_{\bftheta}(\square)$ and $C^\infty_{\#}(\square)$ that preserves absolute value. Indeed, $\varphi$ belongs to $C^\infty_{\bftheta}(\square)$ if, and only if, $\exp(-\i {\bftheta} \cdot y) \varphi$ belongs to $C^\infty_{\#}(\square)$ and $| \varphi | = | \exp(-\i {\bftheta} \cdot y) \varphi | $ in $\square$.

 We remark here that because $C^\infty_\bftheta(\square)$ is isomorphic to $C^\infty_\#(\square) = C^\infty_{\bfzero}(\square)$, with isomorphism ${\rm exp}\big(-\i \bftheta \cdot y\big)$, then the results presented in this section\footnote{The results in this section can be established by first principles making no reference to such an isomorphism.} are immediately established for each $\bftheta \in [0,2\pi)^3$ if proved for $\bftheta = \bfzero$. We shall demonstrate this with the first result of the section and omit the remaining proofs which follow in a similar manner.
\begin{definition}
	\label{def:qp}
	Let $u_\ep\in L^2(\Omega)$ be a bounded sequence and $u \in L^2(\Omega \times \square)$. Then, we say $u_\ep$ (weakly) ${\bftheta}$-quasi-periodic two-scale converges to $u$, denoted by $u_\ep \qp u$, if the following convergence
	\begin{equation}
	\label{def:qp}
	\begin{aligned}
	\lim_{\ep \rightarrow 0}\int_\Omega u_\ep(x) \overline{\phi(x)\varphi\left( \tfrac{x}{\ep} \right)} \, \mathrm{d}x =\int_{\Omega}\int_{\square}u(x,y) \overline{\phi(x) \varphi(y)} \,\mathrm{d}y \mathrm{d}x,  \quad \forall \phi \in C^\infty_0(\Omega), \ \forall \varphi \in C^\infty_{\bftheta}(\square)
	\end{aligned}
	\end{equation}
	holds.
\end{definition}
\begin{remark}Notice that for ${\bftheta} = 0$, this is the standard notion of two-scale convergence. 
\end{remark}
The next important result states that bounded sequences in $L^2(\Omega)$ are relatively compact with respect to quasi-periodic two-scale convergence.
\begin{proposition}
	\label{prop:qpcompact}
	If $u_\ep$ is bounded in $L^2(\Omega)$ then, up to a subsequence, $u_\ep$ weakly ${\bftheta}$-quasi-periodic two-scale converges to some $u \in L^2(\Omega \times \square)$. 
\end{proposition}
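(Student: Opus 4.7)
The plan is to reduce the claim to the $\bftheta=\bfzero$ case, that is, the classical Nguetseng-Allaire two-scale compactness theorem, using the pointwise isometric isomorphism $C^\infty_\bftheta(\square) \cong C^\infty_\#(\square)$ implemented by multiplication by $\exp(-\i\bftheta\cdot y)$. Concretely, I would introduce
$$
\tilde{u}_\ep(x) := \exp\!\big(-\i\bftheta \cdot \tfrac{x}{\ep}\big)\, u_\ep(x), \qquad x \in \Omega,
$$
and note that $|\tilde u_\ep(x)| = |u_\ep(x)|$ pointwise, so $\{\tilde u_\ep\}$ is also bounded in $L^2(\Omega)$. The classical Nguetseng-Allaire theorem then furnishes a subsequence (which we do not relabel) and some $\tilde u \in L^2(\Omega\times\square)$ with
$$
\lim_{\ep\to 0}\int_\Omega \tilde u_\ep(x) \overline{\phi(x)\,\psi\!\big(\tfrac{x}{\ep}\big)}\,\mathrm{d}x = \int_\Omega\!\!\int_\square \tilde u(x,y) \overline{\phi(x)\,\psi(y)}\,\mathrm{d}y\,\mathrm{d}x
$$
for every $\phi \in C^\infty_0(\Omega)$ and every $\psi \in C^\infty_\#(\square)$.

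I would then take $u(x,y) := \exp(\i\bftheta\cdot y)\,\tilde u(x,y) \in L^2(\Omega\times\square)$ as the candidate quasi-periodic two-scale limit. To verify the convergence in Definition \ref{def:qp}, fix arbitrary $\phi \in C^\infty_0(\Omega)$ and $\varphi \in C^\infty_\bftheta(\square)$ and exploit the isomorphism to write $\varphi(y) = \exp(\i\bftheta\cdot y)\psi(y)$ for a unique $\psi \in C^\infty_\#(\square)$. The oscillatory phases cancel exactly under the pairing,
$$
\int_\Omega u_\ep(x)\,\overline{\phi(x)\,\varphi\!\big(\tfrac{x}{\ep}\big)}\,\mathrm{d}x \;=\; \int_\Omega \tilde u_\ep(x)\,\overline{\phi(x)\,\psi\!\big(\tfrac{x}{\ep}\big)}\,\mathrm{d}x,
$$
so passing to the periodic two-scale limit and reinserting $\tilde u(x,y) = \exp(-\i\bftheta\cdot y)\,u(x,y)$ on the right recovers precisely the quasi-periodic limit.

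There is no genuine analytic obstacle here: the multiplier $\exp(\i\bftheta\cdot y)$ preserves $L^2$ norms as well as absolute values, and intertwines the two notions of convergence exactly, so the entire argument is pure bookkeeping once the classical $\bftheta=\bfzero$ theorem is invoked. Should one prefer a self-contained proof avoiding any appeal to Nguetseng-Allaire, an equivalent plan would be to consider the family of antilinear functionals $\Phi \mapsto \int_\Omega u_\ep(x)\,\overline{\Phi(x,x/\ep)}\,\mathrm{d}x$ on tensor test functions $\Phi = \phi\otimes\varphi$, bound them uniformly in $\ep$ by Cauchy--Schwarz combined with the mean-value property \eqref{mvprop}, extract a limit along a subsequence on a countable dense subset of such test functions by a standard diagonal argument, and identify the resulting continuous antilinear functional on $L^2(\Omega\times\square)$ with an element $u \in L^2(\Omega\times\square)$ via Riesz representation.
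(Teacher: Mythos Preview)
Your proposal is correct and follows essentially the same route as the paper: define $\tilde u_\ep(x)=\exp(-\i\bftheta\cdot x/\ep)\,u_\ep(x)$, invoke the classical Nguetseng--Allaire compactness for $\tilde u_\ep$, and then undo the multiplier to obtain $u(x,y)=\exp(\i\bftheta\cdot y)\,\tilde u(x,y)$ together with the required convergence via the isomorphism $C^\infty_\bftheta(\square)\cong C^\infty_\#(\square)$. Your bookkeeping is in fact cleaner than the paper's, which contains a sign slip in the final line (it writes $\exp(-\i\bftheta\cdot y)\tilde u$ where $\exp(\i\bftheta\cdot y)\tilde u$ is meant).
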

\begin{proof}
	The result has been established previously for the case $\bftheta= \bfzero$, see for example \cite{Ng,All,Zh1}. Let us consider $\bftheta \neq \bfzero$.
	Note that the function $\widetilde{u}_\ep = {\rm exp} \big( - \i \bftheta \cdot y \big) u_\ep$ is bounded in $L^2$ and therefore by the assertion for $\bftheta = \bfzero$, up to a discarded subsequence, $\widetilde{u}_\ep$ ($\bfzero$-quasi-periodically) two-scale converges to some $\widetilde{u} \in L^2(\Omega \times Q)$. Now, the result follows from this fact and noting that for fixed $\varphi \in C^\infty_\bftheta(\square)$ one has 
	$$
	\begin{aligned}
	\int_\Omega u_\ep(x) \overline{ \phi(x) \varphi(\xoe)} \, {\rm d}x & =  \int_\Omega \widetilde{u}_\ep(x) \overline{ \phi(x)  {\rm exp}\big( - \i \bftheta \cdot \xoe \big)\varphi(\xoe)} \, {\rm d}x, \\[5pt]
	\int_\Omega\int_\square  {\rm exp} \big(  \i \bftheta \cdot y \big) \widetilde{u}(x,y) \overline{ \phi(x) \varphi(y)} \, {\rm d}y{\rm d}x  & = \int_\Omega\int_Q \widetilde{u}(x,y) \overline{ \phi(x)  {\rm exp} \big( - \i \bftheta \cdot y \big) \varphi(y)} \, {\rm d}y{\rm d}x,
	\end{aligned}
	$$
	and that $ {\rm exp} \big( - \i \bftheta \cdot y \big)$ is a smooth periodic function. Hence $u_\ep \qp {\rm exp} \big(  \i \bftheta \cdot y \big)\widetilde{u}$.
\end{proof}
An important result from the point of view of homogenisation theory is that the test functions $\varphi$ in \eqref{def:qp} can be taken to be quasi-periodic elements of $L^2(\square)$, i.e. the following result holds.
\begin{proposition}
	\label{prop:qptests}
	If $u_\ep \in L^2(\Omega)$ ${\bftheta}$-quasi-periodic two-scale converges to $u \in L^2(\Omega \times \square)$, then the following convergence
	$$
	\begin{aligned}
	\lim_{\ep \rightarrow 0}\int_\Omega u_\ep(x) \overline{\phi(x)\psi\left( \tfrac{x}{\ep} \right)} \, \mathrm{d}x =\int_{\Omega}\int_{\square}u(x,y) &\overline{\phi(x) \psi(y)} \,\mathrm{d}y \mathrm{d}x
	\end{aligned}
	$$
	holds for  all $\phi \in C^\infty_0(\Omega)$,  and for all $\psi \in L^2(\square)$ such that $\psi(y+\bfe_j) = \exp(\i {\bftheta}_j) \psi(y)$ for almost every $y \in \square$ and $j=1,\ldots,d$.
\end{proposition}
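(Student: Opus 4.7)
The plan is to first reduce to the periodic case $\bftheta = \bfzero$ via the multiplication isomorphism used in Proposition \ref{prop:qpcompact}, and then enlarge the class of test functions from $C^\infty_\#(\square)$ to $L^2_\#(\square)$ by density. Specifically, setting $\widetilde{u}_\ep(x) := \exp(-\i \bftheta \cdot \xoe) u_\ep(x)$ and $\widetilde{\psi}(y) := \exp(-\i \bftheta \cdot y) \psi(y)$, the quasi-periodicity of $\psi$ ensures $\widetilde{\psi}$ extends to a periodic $L^2$-function, and as in the proof of Proposition \ref{prop:qpcompact} one verifies that $\widetilde{u}_\ep$ $\bfzero$-quasi-periodically two-scale converges to $\widetilde{u}(x,y) := \exp(-\i \bftheta \cdot y) u(x,y)$. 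The pointwise identity
$$
\int_\Omega u_\ep(x) \overline{\phi(x) \psi(\xoe)} \, {\rm d}x = \int_\Omega \widetilde{u}_\ep(x) \overline{\phi(x)\widetilde{\psi}(\xoe)} \, {\rm d}x,
$$
together with its analogue for the two-scale integral, reduces the problem to proving the statement for $\bftheta = \bfzero$ and a test function in $L^2_\#(\square)$.

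For the periodic case, I would pick $\psi_n \in C^\infty_\#(\square)$ converging to $\psi$ in $L^2(\square)$. By Definition \ref{def:qp} applied to each fixed $n$,
$$
\lim_{\ep \to 0} \int_\Omega u_\ep \overline{\phi(x) \psi_n(\xoe)} \, {\rm d}x = \int_\Omega \int_\square u(x,y) \overline{\phi(x) \psi_n(y)} \, {\rm d}y\, {\rm d}x,
$$
and the right-hand side converges, as $n \to \infty$, to $\int_\Omega \int_\square u \,\overline{\phi\, \psi}$ by Cauchy--Schwarz and $u \in L^2(\Omega \times \square)$. A standard triangle-inequality argument then allows exchanging the limits in $\ep$ and $n$, provided one controls the error on the left uniformly in $\ep$; by Cauchy--Schwarz,
$$
\left\vert \int_\Omega u_\ep \overline{\phi\, (\psi - \psi_n)(\xoe)} \, {\rm d}x\right\vert \le \|u_\ep\|_{L^2(\Omega)} \|\phi\|_{L^\infty(\Omega)} \left( \int_\Omega |(\psi - \psi_n)(\xoe)|^2 \, {\rm d}x\right)^{1/2}.
$$

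The crux of the argument, and really the only non-formal step, is the $\ep$-uniform estimate
$$
\int_\Omega |g(\xoe)|^2 \, {\rm d}x \le C(|\Omega|)\, \|g\|_{L^2(\square)}^2 \qquad \forall g \in L^2_\#(\square),
$$
which extends the smooth mean-value property \eqref{mvprop} to $L^2$-periodic functions. This is proved by covering $\Omega$ by translates $\ep(\square + k)$, $k \in \ZZ^d$, of the $\ep$-scaled periodicity cell -- of which $O(\ep^{-d}|\Omega|)$ suffice -- and noting that on each such cell a change of variables yields $\int_{\ep(\square+k)} |g(\xoe)|^2\, {\rm d}x = \ep^d \|g\|_{L^2(\square)}^2$. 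Applied to $g = \psi - \psi_n$, this makes the previous error uniformly $O(\|\psi - \psi_n\|_{L^2(\square)})$ in $\ep$, and sending first $\ep \to 0$ and then $n \to \infty$ completes the proof.
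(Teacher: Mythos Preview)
Your proposal is correct and follows exactly the route the paper itself indicates: reduce to the periodic case $\bftheta=\bfzero$ via the multiplication isomorphism $\exp(-\i\bftheta\cdot y)$, then invoke the known result for standard two-scale convergence. The paper in fact omits the proof entirely, stating only that it follows from the $\bftheta=\bfzero$ case by this isomorphism; you have supplied the standard density argument for that case (approximate $\psi$ by smooth periodic functions and control the error uniformly in $\ep$ via the tiling bound $\int_\Omega |g(\xoe)|^2\,{\rm d}x \le C\|g\|_{L^2(\square)}^2$), which is precisely how it is done in the references the paper cites.
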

\begin{remark}
	If $\Omega$ is a bounded domain, as in this article, then additionally the test functions $\phi$ can be taken to be elements of $C(\overline{\Omega})$.
\end{remark}
The following results are of interest.
\begin{proposition} \label{profweakqp}{\ }
	
	\begin{enumerate}
		\item{For $u_\ep\in L^2(\Omega)$ ${\bftheta}$-quasi-periodic two-scale converging to $u \in L^2(\Omega\times \square)$ one has that
			$$
			\exp( -\i {\bftheta} \cdot \tfrac{x}{\ep}) u_\ep(x) \rightharpoonup \int_{\square} \exp(-\i {\bftheta} \cdot y) u(x,y) \, {\rm d}y \quad \text{weakly in $L^2(\Omega)$.}
			$$}
		\item{For $u_\ep\in L^2(\Omega)$ ${\bftheta}$-quasi-periodic two-scale converging to $u \in L^2(\Omega\times \square)$ then
			$$
			\liminf_{\ep \rightarrow 0 } \int_\Omega | u_\ep(x)|^2 \, {\rm d}x  \ge\int_\Omega \int_\square | u(x,y)|^2 \, {\rm d}y{\rm d}x.
			$$}
	\end{enumerate}
\end{proposition}

 A result of particular interest in high-contrast homogenisation problems is the following.
\begin{proposition}
	\label{prop:hc}
	Let $u_\ep \in H^1(\Omega)$ satisfy
	$$
	\begin{aligned}
	\sup_{\ep}|| u_\ep ||_{L^2(\Omega)} < \infty, \quad & \quad \sup_{\ep}|| \ep \bfnabla u_\ep ||_{L^2(\Omega)} < \infty.
	\end{aligned}
	$$
	Then, there exists $u \in L^2(\Omega;H^1_\bftheta(\square))$ such that, up to a subsequence, the following convergences hold:
	$$
	\begin{aligned}
	u_\ep \qp u, \quad & \quad \ep \bfnabla u_\ep \qp \bfnabla_y u.
	\end{aligned}
	$$Recall here that $H^1_\bftheta(\square)$ is given by \eqref{h1thetadef}.
\end{proposition}

\begin{proof}
	Let	$\phi$ and $\bfPsi$ denote respectively fixed arbitrary elements of $C^\infty_0(\Omega)$ and $C^\infty_{\bftheta}(\square;\CC^d)$.
	By Proposition \ref{prop:qpcompact}, there exists $u \in L^2(\Omega\times \square)$ and $\bfchi \in L^2(\Omega\times \square ; \CC^d)$  such that, up to a discarded subsequence, the following convergences hold:
	\begin{align}
	u_\ep  \qp u, \quad & \quad 	\ep \bfnabla u_\ep  \qp \chi.  \label{qplimits2}
	\end{align}{ \ }
	\hspace{10pt}  
	Note that, since $u_\ep$ is bounded in $L^2(\Omega)$, then $\ep u_\ep$ strongly converges to zero in $L^2(\Omega)$ and from  Proposition \ref{profweakqp} part 2. we conclude that
	\begin{equation}
	\label{qplimits2.1}
	\ep u_\ep \qp 0.
	\end{equation}	Let us prove that $u \in L^2(\Omega ; H^1_{\bftheta}(Q))$.  Using the convergences \eqref{qplimits2} and \eqref{qplimits2.1} we  pass to the limit in the identity
	$$
	\begin{aligned}
	 \int_{\Omega} \ep \bfnabla u_\ep(x) \cdot \overline{ \phi(x)\bfPsi(\xoe)} \, {\rm d}x & = - \int_{\Omega} u_\ep(x) \overline{ \ep {\rm div} \big( \phi(x)\bfPsi(\xoe) \big)} \, {\rm d}x  \\
	& = - \int_{\Omega} u_\ep(x)  \overline{ \ep  \bfnabla_x  \phi(x) \cdot \bfPsi(\xoe)  }\, {\rm d}x - \int_{\Omega} u_\ep(x)  \overline{  \phi(x){\rm div}_y \bfPsi(\xoe)} \, {\rm d}x,
	\end{aligned}
	$$
	 to deduce that
	$$
	\int_{\Omega}\int_\square   \bfchi(x,y) \cdot \overline{ \phi(x)\bfPsi(y)} \, {\rm d} y{\rm d}x  = - \int_{\Omega}\int_\square   u(x,y) \overline{ \phi(x) {\rm div}_y\bfPsi(y)} \, {\rm d} y{\rm d}x.
	$$
	Therefore, for almost every $x$, the functions $\bfchi(x,\cdot)$ and $u(x,\cdot)$ are related by the identity
	$$
	\int_\square   \bfchi(x,y) \cdot \overline{ \bfPsi(y)} \, {\rm d} y  = - \int_\square  u(x,y) \overline{  {\rm div}_y\bfPsi(y)} \, {\rm d}y, \qquad \forall \bfPsi\, \in C^\infty_{\bftheta}(\square;\CC^d).
	$$
 It is clear that $C^\infty_0\big( (0,1)^d\big) \subset C^\infty_{\bftheta}(\square)$ and so $u\in H^1(\square)$ with $\bfnabla_y u = \bfchi$. It remains to show $u$ belongs to $H^1_\bftheta(\square)$. This follows from noting that after performing  integration by parts in the above identity we arrive at 
\[
\int_{\partial \square} u(x,y) \overline{\bfPsi(y) \cdot \bfnu }\ {\rm d}S(y) = 0, \qquad \forall  \bfPsi\, \in C^\infty_{\bftheta}(\square;\CC^d).
\]
Setting $\bfPsi = e^{\i \bftheta \cdot y} \bfPsi_{\#}$ above, for arbitrary smooth $\square$-periodic $\bfPsi_{\#}$, demonstrates that $u_{\#}(x,\cdot) : = e^{ - \i \bftheta\cdot y} u(x,\cdot)$ is an element of $H^1(\square)$ that satisfies  periodic boundary conditions with respect to $y$. That is, $u_{\#}(x,\cdot) \in H^1_{\#}(\square)$ and so (see definition \eqref{h1thetadef}) $u(x, \cdot) \in H^1_\bftheta(\square)$.
\end{proof}
We end this section with a result that is illuminating when it comes to studying the convergence of spectra for parameter-dependent operator families. It readily provides a one-sided justification for the Hausdorff convergence of the high-contrast spectra 
to  the spectrum associated to quasi-periodic two-scale limits. This result is based on the following definition, which extends the notion of strong resolvent two-scale convergence first introduced by V. V. Zhikov in \cite{Zh1,Zh2}.

\begin{definition}\label{stopcon}
	Fix ${\bftheta} \in [0,2\pi)^d$, and let $A_\ep$ and $A$ be non-negative self-adjoint operators respectively defined in $L^2(\Omega)$ and $H$ a closed subset of $L^2(\Omega\times\square)$. We say that $A_\ep$ strong resolvent ${\bftheta}$-quasi-periodic two-scale  converges to $A$ if for every $f_\ep(x) \in L^2(\Omega)$ that ${\bftheta}$-quasi-periodic two-scale converges to $f(x,y) \in L^2(\Omega\times \square)$, the following convergence
	$$
	u_\ep = (A_\ep + I)^{-1} f_\ep \qp u = (A + I)^{-1} P f, \ \text{as $\ep \rightarrow 0$}
	$$
	holds. Here, $P$ is the orthogonal projection onto $H$ in $L^2(\Omega \times \square)$.
\end{definition}
Here, we state an important consequence of such resolvent convergence. The proof, omitted here, follows standard spectral theoretic arguments, see for example \cite{Zh1}.
\begin{proposition}
	\label{tsrescon}
	If $A_\ep$ strong resolvent ${\bftheta}$-quasi-periodic two-scale converges to $A$ then the spectrum $\sigma(A)$ of $A$ is related to the spectrum $\sigma(A_\ep)$ of $A_\ep$ in the following sense:{\ }
	
	{\ }
	
	For every $\lambda \in \sigma(A)$ there exists $\lambda_\ep \in \sigma(A_\ep)$ such that $\lambda_\ep$ converges to $\lambda$ as $\ep$ tends to zero.
	
\end{proposition}


\subsection{On the relevance of  quasi-periodic two-scale convergence in spectral asymptotics}
\label{sec:comparison}
Proposition \ref{tsrescon} informs us that, in principle,  one should consider all strong quasi-periodic two-scale limits of an operator $A_\ep$ to fully characterise its limit spectrum (in the Hausdorff sense). Yet, clearly this is not always the case: such a notion of convergence has not appeared previously, nor was it  needed, to study the spectral asymptotics of classical and particular double-porosity operators. The reason for this shall be elucidated here. Moreover, at the end of this section we shall argue when quasi-periodic convergence is necessary via a model problem that  we later study in detail  in this article. 

\subsubsection{Classical homogenisation}
Consider the resolvent problem: For fixed $f \in L^2(\Omega)$ find $u_\ep \in H^1_0(\Omega)$ such that\begin{equation}
\label{clasprob}
- {\rm div} \big( a(\xoe) \bfnabla u_\ep\big)+ u_\ep = f,
\end{equation}
where  the symmetric matrix-valued function $a$ is $\square$-periodic, elliptic and bounded: $\exists \nu >0$ such that 
\[
\nu |\xi|^2 \le a(y) \xi \cdot \overline{\xi} \le \nu^{-1} | \xi|^2, \qquad \forall \xi \in \CC^d, \, \text{ a.e. } y \in \square.
\]
The following homogenisation theorem is classical.
\begin{theorem}[Classical homogenisation theorem]
\label{chomthm}
Let $\ep$ be a sequence with limit $0$, and  $f_\ep \in L^2(\Omega)$  a sequence such that $f_\ep$ weakly converges in $L^2(\Omega)$ to some $f_0$ as $\ep$ tends to zero. Then $u_\ep \in H^1_0(\Omega)$ the solution to \eqref{clasprob}, for $f = f_\ep$,
converges weakly in $H^1_0(\Omega)$ (and strongly in $L^2(\Omega)$) to $u_0 \in H^1_0(\Omega)$ the solution to
\[
 - {\rm div} \big( a^{\rm hom} \bfnabla u_0\big) + u_0 =f_0.
\]
Here $a^{\rm hom}$ is the constant symmetric homogenised matrix determined by $a$:
\[
a^{\rm hom} \xi \cdot \xi : = \min_{N \in H^1_{\#}(\square)
} \int_\square a \big( \nabla N + \xi) \cdot \big( \nabla N + \xi), \qquad \forall \xi \in \RR^d.
\] 
\end{theorem}
It is well-known that the homogenisation theorem implies  the Hausdorff convergence of spectra (cf. \cite[Section 2]{AlCo}): 
\[
\lim_\ep \sigma(A_\ep) = \sigma(A^{\rm hom}).
\]

Let us study the quasi-periodic two-scale limits of $u_\ep$.
\begin{proposition}
\label{p:clasqp}
Fix $\bftheta \in (0,2\pi)^d$ and consider $f_\ep \in L^2(\Omega)$ such that $f_\ep \qp f_0$. Then, $u_\ep \in H^1_0(\Omega)$ $\bftheta$-quasi-periodic two-scale converges to zero; that is $u_\ep \qp 0$.
\end{proposition}
\begin{remark}
\begin{enumerate}
	\item{ Proposition \ref{p:clasqp} informs us that for the classical resolvent problem \eqref{clasprob}, the non-zero quasi-periodic micro-oscillations at leading order do not contribute to the spectral asymptotics. So one need only study the $\bftheta = 0$ quasi-periodic oscillations, i.e. the standard two-scale limit. It is well-known that the (periodic) two-scale limit coincides with the classical limit provided by Theorem \ref{chomthm}, see for example \cite{All,Zh1}.
	}
\item{ The part of the spectrum corresponding the $\bftheta$-quasi-periodic micro-oscillations, for $\bftheta \neq \bf0$, actually resides in an $\ep^{-2}$ neighbourhood of infinity; this can be formally seen from the considerations of Section \ref{mot}: for $a_\ep$ independent of $\ep$, the eigenvalues $\lambda_\ep$ in \eqref{blochmotp} are clearly of the order $\ep^{-2}$. To study such `high-frequency' spectrum one can consider the re-scaled operator $\ep^{2} A_\ep$, that is consider coefficients of the form $a_\ep = \ep^{2} a$. The precise study of such high-frequency spectra was performed in \cite{AlCo} for a broader class of moderately contrasting locally periodic coefficients. Therein, the authors provide a rigorous description of  the high-frequency spectral asymptotics in terms of non-trivial quasi-momenta $\bftheta$. This was done by introducing an appropriate notion of ``Bloch wave homogenisation". For the reduced setting of (globally) periodic coefficients, the Bloch-wave operator-limits determined therin can readily be shown to be equivalent to the $\bftheta$-quasi-periodic two-scale limits.
}
\end{enumerate}
\end{remark}
\begin{proof}[Proof of Proposition \ref{p:clasqp}]
The sequence $f_\ep$ weakly converges, cf. Proposition \ref{profweakqp} part 1., and so is bounded. Multiplying \eqref{clasprob} (for $f = f_\ep$) and integrating over $\Omega$, and using the ellipticity of $a$, produces the a-priori bound\[
\Vert u_\ep \Vert_{L^2(\Omega)}^2 + \nu \Vert \bfnabla  u_\ep \Vert_{L^2(\Omega)} ^2 \le \Vert f_\ep \Vert_{L^2(\Omega)}^2 \le  C < \infty.
\]
Applying Proposition \ref{prop:hc}, we deduce that there exists $u \in L^2(\Omega;H^1_\bftheta(\square))$ such that, up to a subsequence, the following convergences hold:
$$
\begin{aligned}
u_\ep \qp u, \quad & \quad \ep \bfnabla u_\ep \qp \bfnabla_y u.
\end{aligned}
$$
Let us show $u=0$:  $\bfnabla u_\ep$ is a bounded sequence and so $\ep \bfnabla u_\ep$ strongly converges to zero in $L^2$. Therefore, by Proposition \ref{profweakqp} part 2., we deduce $\bfnabla_y u = 0$. As $\square $ is connected it follows that $u$ is constant. Yet $u_\ep \in H^1_\bftheta(\square)$ and there are no non-trivial constant $\bftheta$-quasi-periodic functions for $\bftheta \neq 0$, see \eqref{h1thetadef}. Hence, $u = 0$.
\end{proof}

\subsubsection{Double-porosity model}\label{dpexamplesec}
Consider the resolvent problem:   For fixed $f \in L^2(\Omega)$ find $u_\ep \in H^1_0(\Omega)$ such that\begin{equation}
\label{h.cprob}
- {\rm div} \big( a_\ep(\xoe) \bfnabla u_\ep\big)+ u_\ep = f.
\end{equation}
Here 
\begin{equation*}
a_\ep(y) = \left\{ \begin{matrix}
a_{1}(y), & y \in Q_1, \\
\ep^2 a_{0}(y), & y \in Q_0,
\end{matrix} \right.
\end{equation*}
where $Q_0$ is a smooth compactly contained subset of $\square$ such that, for $Q_1 : = \square \backslash \overline{Q_0}$, the periodic extension
\[
F_1 : = \bigcup_{z\in \ZZ^d} (Q_1 +z)
\] forms a connected set in $\RR^d$. The functions $a_i$, $i =0 ,1$ are taken to be real-valued, elliptic and bounded on $Q_i$.  The following homogenisation theorem is established in \cite[Theorem 5.1]{Zh1}.
\begin{theorem}
\label{dphomthm}
Suppose $f = f_\ep$ in the right-hand side of \eqref{h.cprob} two-scale converges to some $f_0$, that is $f_\ep \qp f_0$ for $\bftheta = 0$. Then, the sequence of solutions $u_\ep$ two-scale converges to $u_0(x,y) = u(x) + v(x,y)$, where $(u,v)$ belongs to
\[
V_0 = H^1_0(\Omega) \oplus L^2(\Omega;H^1_0(Q_0))
\] and uniquely solves 
\begin{equation}\label{dplim}
\begin{aligned}
\int_\Omega &  a^{\rm hom}_{dp} \bfnabla_x u(x) \cdot \overline{\bfnabla_x \phi(x)}\, {\rm d}x + \int_\Omega \int_\square a_0(y)\bfnabla_y v(x,y) \cdot \overline{\bfnabla_y \varphi(x,y)} \, {\rm d}y{\rm d}x   \\ &+ \int_\Omega \int_\square (u(x) + v(x,y)) \cdot \overline{ (\phi(x) + \varphi(x,y))} \, {\rm d}y{\rm d}x =   \int_\Omega \int_\square f_0(x,y) \cdot \overline{ (\phi(x) + \varphi(x,y))} \, {\rm d}y{\rm d}x, \\ & \hspace{.5\textwidth}\qquad \forall \phi \in H^1_0(\Omega), \, \forall \varphi \in L^2(\Omega;H^1_0(Q_0)).
\end{aligned}
\end{equation}
Here, $a^{\rm hom}_{dp}$ is the constant symmetric and positive homogenised matrix for perforated domains determined by $a_1$:
\[
a^{\rm hom}_{\rm dp} \xi \cdot \xi : = \min_{N \in H^1_{\#}(Q_1)
} \int_{Q_1} a_1 \big( \nabla N + \xi) \cdot \big( \nabla N + \xi), \qquad \forall \xi \in \RR^d.
\] 
\end{theorem}
This result informs us that $A_\ep$ strongly two-scale converges to the operator $A_{\bfzero}$, defined in $L^2(\Omega \times \square)$, associated to the above two-scale limit resolvent problem. Therefore, by Proposition \ref{tsrescon} for $\bftheta = \bfzero$, the lower-semicontinuity of the spectral convergence is ensured. In fact, Zhikov proved in \cite[Theorem 8.1]{Zh1}, under the condition that $F_1$ is connected in $\RR^d$, the stronger result 
\[
\lim_\ep \sigma(A_\ep) = \sigma(A_{\bfzero}).
\]

Let us determine the strong resolvent quasi-periodic two-scale limits of $A_\ep$. 
\begin{proposition}
\label{thetaqphclim}
Fix $\bftheta \in (0,2\pi)^d$. Suppose $f = f_\ep$ in the right-hand side of \eqref{h.cprob} such that $f_\ep \qp f_0$ to some $f_0 \in L^2(\Omega \times \square)$. Then, the sequence of solutions $u_\ep$ $\bftheta$-quasi-periodically two-scale converges to $v_0(x,y) \in L^2(\Omega;H^1_0(Q_0))$ the  solution to 
\begin{equation}\label{hcqplim}
\begin{aligned}
 \int_\Omega \int_\square a_0(y) \bfnabla_y v_0(x,y) \cdot \overline{\bfnabla_y \varphi(x,y)} \, {\rm d}y{\rm d}x   + \int_\Omega \int_\square  v(x,y) \cdot \overline{ \varphi(x,y)} \, {\rm d}y{\rm d}x =   \int_\Omega \int_\square f_0(x,y) \cdot \overline{  \varphi(x,y)} \, {\rm d}y{\rm d}x, \\  \hspace{.5\textwidth}\qquad  \forall \varphi \in L^2(\Omega;H^1_0(Q_0)).
\end{aligned}
\end{equation}
\end{proposition}

\begin{remark}\

1. Note that $A_\bftheta = \mathcal{A}$ is independent of $\bftheta$ for $\bftheta \neq  \bfzero$, and its spectrum is the point spectrum given by  the operator whose action is $u \mapsto - {\rm div} ( a_0 \bfnabla u )$ with domain $\{ u \in H^1_0(Q_0) \, | \, - {\rm div} ( a_0 \bfnabla u ) \in L^2(Q_0) \}$. 

2. It is easy to see $\mathcal{A} \subset A_{\bfzero}$ (by noting that setting $u=\phi = 0$ in \eqref{dplim} gives  \eqref{hcqplim}) and so 
\[
\bigcup_{\bftheta \neq \bfzero} \sigma(A_{\bftheta}) = \sigma(\mathcal{A}) \subset \sigma(A_{\bfzero}).
\]
The set 
\[
\bigcup_{\bftheta \neq \bfzero} \sigma(A_{\bftheta}),
\]
is the limit spectrum  arriving from quasi-periodic micro-oscillations.

3. 
The restriction of the limit spectrum $\sigma(A_{\bfzero})$ to $
\bigcup_{\bftheta \neq \bfzero} \sigma(A_{\bftheta})$  is achieved  by considering the purely macro-scopic component $u(x)$ (of eigenfunctions) to be zero. For this reason, we coin this spectrum to be pure Bloch spectrum. 
  In the simplified setting of double-porosity the pure Bloch  spectrum is point spectrum (due to the fact $A_\bftheta = \mathcal{A}$ is independent of $\bftheta$ for $\bftheta \neq  \bfzero$).  In general, we expect this spectrum to have band-gap structure, and the gaps have only contracted to points here due to the geometric constraint that $F_1$ is connected in $\RR^d$. This expectation is verified in Section \ref{sec:limspec}.

3. Even though the strong resolvent $\bftheta$-quasi-periodic limit of $A_\ep$ exists, it has trivial dependence on $\bftheta$, $\bftheta \neq \bfzero$ and more importantly is a restriction of the two-scale limit $A_{\bfzero}$. Hence, one need only consider $A_{\bfzero}$, and this explains why in this setting one is to expect Zhikov's result $\lim_\ep \sigma(A_\ep) = \sigma(A_{\bfzero})$. In general, the limit $A_{\bfzero}$ will not be sufficient to capture the full spectral asymptotics.

\end{remark}

\begin{proof}[Proof of Proposition \ref{thetaqphclim}]
Let us consider $a_1$ (respect. $a_0$) to be extended by zero into $Q_0$ (respect $Q_1$),  and consider $\nu>0$ to be the constant such that
\[
a_1 + a_0 \ge \nu.
\]
The solution $u_\ep$ solves
\begin{equation}
\label{dpqpe1}
\int_\Omega (a_1(\xoe) + \ep^2 a_0(\xoe) ) \bfnabla u_\ep \cdot \overline{ \bfnabla \phi} +  \int_\Omega  u_\ep  \overline{ \phi} = \int_\Omega  f_\ep  \overline{ \phi}, \qquad \forall \phi \in H^1_0(\Omega).
\end{equation}
Setting $\phi = u_\ep$ in the above variational problem and using the fact that $a_1 + a_0 \ge\nu$, we deduce the a-priori bound (for $\ep \le 1$)
\[
\Vert u_\ep \Vert^2_{L^2(\Omega)} + \nu  \Vert \ep \bfnabla u_\ep \Vert^2_{L^2(\Omega)} \le \Vert f_\ep \Vert_{L^2(\Omega)}^2 \le C < \infty. 
\]
Additionally, we have the bound
\[
\Vert \sqrt{a_1}(\xoe) \bfnabla u_\ep \Vert_{L^2(\Omega)}^2  \le  \Vert f_\ep \Vert_{L^2(\Omega)}^2 \le C < \infty. 
\]
Indeed, $a_1 \ge 0$ and
\[
\int_\Omega a_1(\xoe) \bfnabla u_\ep \cdot \overline{ \bfnabla u_\ep} \le  \int_\Omega (a_1(\xoe) + \ep^2 a_0(\xoe) ) \bfnabla u_\ep \cdot \overline{ \bfnabla u_\ep} +  \int_\Omega  | u_\ep|^2 = \int_\Omega  f_\ep  \overline{ u_\ep}.
\]
By Proposition \ref{prop:hc} it follows that, up to a discarded subsequence,  
\begin{equation}
\label{hcqpe2}
u_\ep \qp v_0, \quad \text{ and  } \quad \ep \bfnabla u_\ep \qp \bfnabla_y v_0,
\end{equation}
for some $v_0 \in L^2(\Omega ; H^1_\bftheta(Q))$. 

Let us show that $v_0 \in L^2(\Omega;H^1_0(Q_0))$. By Proposition \ref{prop:qptests} it follows that
\[
\sqrt{a_1}(\xoe) \ep \bfnabla u_\ep \qp \sqrt{a_1}(y) \bfnabla_y v_0.
\]
Yet $\sqrt{a_1}(\xoe) \ep \bfnabla u_\ep$ strongly converges to zero in $L^2(\Omega)$. Therefore $\sqrt{a_1}(y) \bfnabla_y v_0 = 0$, which is equivalent to $ \bfnabla_y v_0 = 0$ on $Q_1$ (recall $a_1$ is positive on $Q_1$ and zero on $Q_0$). As $Q_1$ is connected then $v_0$ is constant in $Q_1$.  Now, since the periodic extension $F_1 = \bigcup_{z\in \ZZ^d} (Q_1 +z)$ forms a connected set then $v_0$ is constant in $F_1$. Yet,  $v_0 \in H^1_\bftheta(\square)$ for $\bftheta \neq \bfzero$ and consequently this constant is zero, cf \eqref{h1thetadef}.

It remains to prove $v_0$ solves \eqref{hcqplim}. This can easily be deduced by passing the the $\bftheta$-quasi-periodic limit in \eqref{dpqpe1} for test functions $\phi(x) = \psi(x)\varphi(\xoe)$, $\psi \in H^1_0(\Omega)$, $\varphi \in H^1_0(Q_0)$ and using convergences \eqref{hcqpe2}.
\end{proof}

\subsubsection{An example with non-trivial quasi-periodic limits}
Let us provide an example which demonstrates that in general  the family $A_{\bftheta}$, of strong resolvent $\bftheta$-quasi-periodic limits to $A_\ep$, are not restrictions of $A_{\bfzero}$.

Suppose, we consider \eqref{h.cprob} for coefficients
\[
a_\ep = a_1 + \ep^2 a_0,
\]
and $a_i$ are real-valued $\square$-functions such that $a_1 \ge 0$ and $a_1 + a_0 \ge \nu > 0$.  Let $f_\ep$ be a bounded sequence and $u_\ep$ solve \eqref{h.cprob} for $f = f_\ep$. Arguing as in the proof of Proposition \ref{thetaqphclim}, we see that $u_\ep$ to solution to \eqref{h.cprob}  will satisfy the a-priori bounds
\[
\Vert u_\ep \Vert^2_{L^2(\Omega)} + \nu  \Vert \ep \bfnabla u_\ep \Vert^2_{L^2(\Omega)} + 
\Vert \sqrt{a_1}(\xoe) \bfnabla u_\ep \Vert_{L^2(\Omega)}^2  \le  \Vert f_\ep \Vert_{L^2(\Omega)}^2 \le C < \infty. 
\]
In particular, Proposition \ref{prop:hc} informs us that up to a subsequence 
	\begin{equation*}
	u_\ep \qp u_0, \quad \text{ and  } \quad \ep \bfnabla u_\ep \qp \bfnabla_y u_0,
	\end{equation*}
	for some $u_0 \in L^2(\Omega ; H^1_\bftheta(Q))$. Moreover, by an application of Proposition \ref{prop:qptests} we deduce that 
	\[
	\sqrt{a_1} \bfnabla_y u_0 = 0.
	\]
	Denoting by $V_\bftheta$ the closed linear subspace of $H^1_\bftheta(\square)$ given by\footnote{The space $V_{\bfzero}$ was first introduced in \cite{SmMoM} and coined the space of microscopic oscillations. We have appropriately extended this notion to $\bftheta$-quasi-periodic oscillations here, $\bftheta \neq \bfzero$.}
	\[
	V_\bftheta = \{  v \in H^1_\bftheta(\square) \, | \, 	\sqrt{a_1} \bfnabla_y v= 0 \}.
	\]
		Suppose we show an example where $V_{\bftheta}$ is not a subset of $V_{\bfzero}$ for some  non-trivial $V_\bftheta$, $\bftheta \neq \bfzero $.
 Then, for such examples we should not expect  that  $A_{\bfzero}$ is  an extension of $A_{\bftheta}$, nor should we expect $\sigma(A_\bftheta) \subset \sigma(A_{\bfzero})$. Let us provide such an example. The conjectures (stated immediately above) based on this example will be proved rigorously in the remainder of the article.

Suppose $Q_1$ is the cylinderical domain
\[
Q_1  := [0,1) \times [\tfrac{1}{4}, \tfrac{3}{4} ]^2,
\]
and
\[
a_1(y) : = \left\{ 
\begin{aligned}
1, & \quad & y \in Q_1, \\ 
0, & &y \in Q_0.
\end{aligned} \right.
\]
Notice that  $F_1 : = \bigcup_{z\in \ZZ^3} (Q_1 + z)$ the periodic extension of $Q_1$ into $\RR^d$ consists of infinitely many mutually disjoint cylinders  $C_{l}: = \RR  \times [\tfrac{1}{4}+ l_1, \tfrac{3}{4} + l_2]^2$, $\forall l \in \ZZ^2$. That is, the assumptions of Subsection \ref{dpexamplesec}, and in particular \cite{Zh1,Zh2}, do not hold.

Now $v \in V_\bftheta$ if, and only if, $v\in H^1_\bftheta(\square)$ with $v=c$ for some $c \in \CC$ on $Q_1$. This implies, cf \eqref{h1thetadef}, that 
\[
v(1, y_2, y_3) = e^{\i \theta_1} v(0,y_2,y_3), \quad (y_1,y_2) \in (0,1)^2, \,j\in \{1,2,3\}, 
\]
(in the sense of trace). Then, for $(y_2,y_3) \in (\tfrac{1}{4},\tfrac{3}{4})^2$ we arrive at the condition
\[
c= e^{\i \theta_1 } c. 
\]
Therefore, if $\theta_1 \neq 0$, then the above condition only holds if $c = 0$. That is $v$ must necessarily be zero on $Q_1$. On the other hand, if $\theta_1 = 0$ then any $H^1_\bftheta(\square)$ function that is constant on $Q_1$ belongs to $V_\bftheta$. In particular we see that $V_\bftheta$ does not belong to $V_{\bfzero}$  for all $\bftheta \in (0,2\pi)^3$.

 \begin{remark}
 Note that if $Q_1$ contains a connected subset which joins two opposite faces of the square $\square$ then the  space $V_{\bftheta}$ non-trivially depends on $\bftheta$. Consequently, non-trivial limit Bloch spectrum is  expected for $\bftheta$ aligned   orthogonally to these faces.
 \end{remark}

The remainder of the article is dedicated to determining the strong quasi-periodic two-scale limits of $A_\ep$ for such fibre-like inclusions. Moreover, we  demonstrate that indeed  $A_\bftheta$ are not restrictions of $A_{\bfzero}$ and that $\sigma(A_\bftheta)$ form non-trivial subsets of $\lim_\ep \sigma(A_\ep)$.
 
\section{Problem formulation and Homogenisation}
\label{sec:pfandhom}
In this article we are concerned with the asymptotic analysis of the resolvent problem 
\begin{equation}
\label{resolventprob0}
\left\{ \begin{aligned}
& \text{Find $u_\ep \in H^1_0(\Omega)$ such that} \\
& -{\rm div} \big( a_\ep( \xoe) \bfnabla u_\ep  \big) + u_\ep = f_\ep \qquad \text{in $\Omega$}
\end{aligned}  \right. \qquad
\end{equation}
where $\ep <1$ is a small parameter,  $\Omega$ is a smooth open bounded star-shaped domain\footnote{All the results and proofs follow through in an identical manner for the case where $\Omega$ is the whole space.}  and $f_\ep \in L^2(\Omega)$ known. The coefficient $a_\ep$ is given by
\begin{equation}
\label{hccoeffs}
\hspace{-6pt}a_\ep(y) = \left\{ \begin{matrix}
a_1(y), & y \in Q_1, \\
\ep^2 a_0(y), & y \in Q_0,
\end{matrix}  \right. \quad 0\le a_i, a_i^{-1} \in L^\infty(Q_i),\quad  a_i = 0 \text{ on $Q_{1-i}$},  i = 0,1,
\end{equation}
and the regions $Q_0$ and $Q_1$ are described as follows (cf. Figure \ref{fig:inclusions}).

{\bf Geometric assumptions.} For $j = 1,2,3$ we consider smooth domains $S_j$ compactly contained in $(0,1)^2$ that have mutually disjoint closures. We denote by $C_j$ the cylinder aligned to the $j$-th co-ordinate axis with cross-section $S_j$, i.e. $ C_1 : =\{ y \in (0,1)^3 \, \big| \, y \in (0,1)\times S_1 \}$, $ C_2 : =\{ y \in (0,1)^3 \, \big| \, y = (z_2,z_3,z_1), z \in (0,1)\times S_2 \}$ and $ C_3 : =\{ y \in (0,1)^3 \, \big| \, y = (z_3,z_1,z_2), z \in (0,1)\times S_3 \}$.

Then, for a given non-empty subset $\I$ of $\{1,2,3\}$, we consider $Q_1 = \cup_{i \in \I} C_i$. We denote by $\Gamma_i = \partial C_i \backslash \partial Q$ and $\Gamma = \bigcup_{i \in \I} \Gamma_i = \partial Q_1 \backslash \partial Q$.

\begin{figure}[h]
	\centering
	\includegraphics[width=\linewidth]{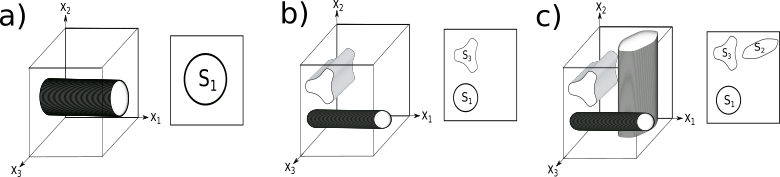}
	\caption{a) An example stiff component $Q_1$ consisting of one cylinder $C_1$ extending in the $x_1$ direction, i.e. $\I = \{1\}$. b) The stiff component $Q_1$ consists of two disjoint cylinders extending in the co-ordinate directions $x_1,$ and $x_3$, i.e. $\I = \{1,3\}$. c) $Q_1$ consists of mutually disjoint cylinders extending in all co-ordinate directions, i.e. $ \I = \{1,2,3\}$. }
	\label{fig:inclusions}
\end{figure}

Under this geometric assumption we determine for each $\bftheta \in [0,2\pi)^3$ the strong resolvent $\bftheta$-quasi-periodic two-scale limit, cf. Section \ref{sec:qpcon}, of the self-adjoint operator $A_\ep$ associated to resolvent problem \eqref{resolventprob0}. That is, for a fixed ${\bftheta} \in [0,2\pi)^3$ and a given bounded sequence $f_\ep \in L^2(\Omega)$ such that $f_\ep \qp f$, i.e.
\begin{equation*}
\begin{aligned}
\lim_{\ep \rightarrow 0}\int_\Omega f_\ep(x) \overline{\phi(x)\varphi\left( \tfrac{x}{\ep} \right)} \, \mathrm{d}x =\int_{\Omega}\int_{\square}f(x,y) \overline{\phi(x) \varphi(y)} \,\mathrm{d}y \mathrm{d}x,  \quad \forall \phi \in C^\infty_0(\Omega), \ \forall \varphi \in C^\infty_{\bftheta}(\square)
\end{aligned}
\end{equation*}
we aim to determine the $\bftheta$-quasi-periodic two-scale limit behaviour of the solution $u_\ep \in H^1_0(\Omega)$  to
\begin{equation}
\label{resolventproblem}
\begin{aligned}
\int_{\Omega} \big(a_1 (\tfrac{x}{\ep}) + \ep^2 a_0 (\tfrac{x}{\ep})\big) \bfnabla u_\ep(x) & \cdot \overline{\bfnabla \phi}(x) \, {\rm d}x \ + \int_{\Omega} u_\ep(x)  \overline{\phi}(x) \, {\rm d}x  = \int_{\Omega} f_\ep(x)  \overline{\phi}(x) \, {\rm d}x, \qquad \forall \phi \in C^\infty_0(\Omega).
\end{aligned}
\end{equation}

As $f_\ep$ is bounded in $L^2(\Omega)$, upon setting $\phi = u_\ep$ in \eqref{resolventproblem} we deduce that the sequences 
\begin{equation}\label{apbounds}
\begin{aligned}
|| \sqrt{a_1}(\tfrac{\cdot}{\ep}) \bfnabla u_\ep ||_{L^2(\Omega;\CC^3)}, \quad || \ep \bfnabla u_\ep ||_{L^2(\Omega;\CC^3)}, \ \ \text{ and} \ \ || u_\ep ||_{L^2(\Omega)},
\end{aligned}
\end{equation}
are bounded. Let us describe the $\bftheta$-quasi-periodic two-scale limit, referring to Section \ref{proofthm} for the details.

The limit of $u_\ep(x)$ will be a function  $u(x,y)$, of two variables $x \in \Omega$, $y \in Q$, that is $\bftheta$-quasi-periodic with respect to the second variable $y$, cf Proposition \ref{prop:hc}. Furthermore, due to the fact that in each cylinder $C_i$, $i \in \I$, the gradient of $u_\ep$ is bounded, the limit $u$ necessarily belongs to the (Bochner) space $L^2(\Omega; V_\bftheta)$ where
\begin{equation}
\label{Vtheta}
V_\bftheta : = \{ v \in H^1_\bftheta(Q) \, | \, \text{ $v$ is constant in $C_i$ for each $i \in \I$} \}.
\end{equation}
It follows from this (see \eqref{h1thetadef}) that $u$ is non-zero in cylinder $C_i$ if and only if the $i$-th component $\theta_i$ of $\bftheta$ is zero. If $\theta_i = 0$, then we determine that $u_i$ is not only non-trivial but it is actually more regular in the $x_i$-th coordinate direction: $\partial_{x_i} u_i \in L^2(\Omega)$. 

More precisely, for $\I^\bftheta$ the subset of indexes $\I \subseteq \{1,2,3\}$ given by $\mathcal{I}^\bftheta : = \{ i \in \I \, | \, \theta_i = 0\}$, we denote by  $\CC^\bftheta$ the closed subspace of $\CC^3$ spanned by $\{ \bfe_i \}_{i \in \I^\bftheta}$,\footnote{Note that $\CC^\bftheta$ is either the whole space, a plane or a line in $\CC^3$.} and show that the function $u$ belongs to the set 
\begin{equation}
\label{limitspace}
\begin{aligned}
U_\bftheta = &\big\{ u \in L^2(\Omega;H^1_\bftheta(Q)) \, \big| \, u = u_i \text{ on $\Omega \times C_i$},  \\ 
&\text{ for some $\bfu \in L^2(\Omega; \CC^\bftheta)$ with $\partial_i u_i \in L^2(\Omega)$ and $u_i \nu_i=0$ on $\partial \Omega$ } \big\},
\end{aligned}
\end{equation}
which is clearly a Hilbert space when endowed  with the inner product
$$
\begin{aligned}
( u,v)_{U_\bftheta} : = \sum_{i \in \I^\bftheta} \int_{\Omega}  \partial_i u_i(x) \overline{\partial_i v_i(x)}  \, {\rm d}x + \int_\Omega \int_{Q_0} \bfnabla_y u(x,y) \cdot \overline{\bfnabla_y v(x,y)} \, {\rm d}y{\rm d}x \\ + \int_\Omega \int_Q  u(x,y)\overline{v(x,y)} , {\rm d}y{\rm d}x.
\end{aligned}
$$
Here, $\bfnu$ is the outer unit normal to $\partial \Omega$.
%
%
%
%
%

For each fibre $C_i$ there corresponds an effective constant material parameter $a^{\rm hom}_i >0$ given by
\begin{equation}
\label{ahom}
a^{\rm hom}_i =\int_{C_i}  a_1(y)[\partial_{y_i} N^{(i)}(y) + 1] \, {\rm d}y,
\end{equation}
where $N^{(i)} \in H^1_{\#_i}(C_i) : = \{ u \in H^1(C_i) \, | \, u \text{ is $1$-periodic in the variable $y_i$}\}$ is the unique  solution to the cell problem
 \begin{equation}
 \label{fluxfinal}
 \left\{ \begin{aligned}
 & \int_{C_i} a_1(y) \big[ \bfnabla N^{(i)}(y) +  \bfe_i  \big] \cdot \overline{ \bfnabla \phi } (y) \, {\rm d}y = 0, \qquad \forall \phi \in H^1_{\#_i}(C_i), \\
 &\int_{C_i } N^{(i)} = 0.
 \end{aligned} \right.
  \end{equation}
Then, for each $\bftheta \in [0,2\pi)^3$, the $\bftheta$-quasi-periodic two-scale limit problem is formulated as follows: For $f \in L^2(\Omega \times Q)$ find $u \in U_\bftheta$ such that
	\begin{equation}
	\label{qphomlimit0}
	\begin{aligned}
	 \sum_{i \in \I^{\bftheta}}&\int_{\Omega} a^{\rm hom}_i \partial_{x_i} u_i (x) \overline{\partial_{x_i} \phi_i}(x) \, {\rm d}x + \int_{\Omega} 	\int_{Q_0} a_0(y) \bfnabla_y u(x,y) \cdot \overline{\bfnabla_y \phi(x,y)} \, {\rm d}y {\rm d}x  \\	& + 	\int_{\Omega} 	\int_{Q} u(x,y)\overline{ \phi(x,y)} \, {\rm d}y {\rm d}x  = 	\int_{\Omega} 	\int_{Q} f(x,y)   \overline{ \phi(x,y) }\, {\rm d}y {\rm d}x, \qquad\quad \forall  \phi \in U_\bftheta.
	\end{aligned}
	\end{equation}
As $a^{\rm hom}_i$ are positive numbers and $a_0^{-1} \in L^\infty(Q_0)$ it follows that the left-hand side of the above problem defines an equivalent inner product on the space $U_\bftheta$, and consequently the existence and uniqueness of solutions $u$ to \eqref{qphomlimit0} are ensured by the Riesz representation theorem.

Setting $\phi = 0$ on $Q_1$ in \eqref{qphomlimit0} gives the equation 
$$
- {\rm div}_y \big( a_0(y) \bfnabla_y u(x,y) \big) + u(x,y) = f(x,y), \hspace{.9cm}x \in \Omega, \,y \in Q_0,
$$
and a subsequent integration by parts in \eqref{qphomlimit0} leads to the variational formula
$$
\begin{aligned}
\sum_{i \in \I^{\bftheta}}\int_{\Omega} a^{\rm hom}_i \partial_{x_i} u_i &(x) \overline{\partial_{x_i} \phi_i}(x) \, {\rm d}x + \sum_{i \in \I^\bftheta} \int_{\Omega} \left(  	\int_{\Gamma_i} a_0(y) \bfnabla_y u(x,y) \cdot \bfn(y)  \, {\rm d}y \right)\overline{\phi_i(x)} {\rm d}x  \\	& + \sum_{i \in \I^{\bftheta}}	|C_i| \int_{\Omega} 	 u_i(x)\overline{ \phi_i (x)} {\rm d}x  = \sum_{i \in \I^{\bftheta}}	 \int_{\Omega} \left( \int_{C_i} f(x,y) \, {\rm d}y \right)  \overline{ \phi_i(x) } {\rm d}x,
\end{aligned}
$$
for all $\bfphi \in L^2(\Omega,\CC^\bftheta)$, such that $\partial_i \phi_i \in L^2(\Omega)$. For each fixed $j \in \I^\bftheta$ we set $\phi_j = \phi$, $\phi \in C^\infty_0(\Omega)$ and $\phi_i = 0$ for $i \neq j$, above. This leads to the $\bftheta$-quasi-periodic two-scale homogenised system of equations.
\begin{equation}
\label{representation}
\begin{aligned}
&\text{for each $j \in \mathcal{I}^\bftheta$:} \quad  \left\{
\begin{aligned}
&\begin{array}{lcr}
- a^{\rm hom}_j \partial_{x_j}^2 u_j(x) + \T_j(u) + |C_j | u_j = \langle f \rangle_{j}(x), & &  \quad x \in \Omega, \\[5pt]
- {\rm div}_y \big( a_0(y) \bfnabla_y u(x,y) \big)  + u(x,y)= f(x,y), & \hspace{.5cm} & x \in \Omega, \,y \in Q_0, 
\end{array} \\
& \hspace{10pt}u_0 = u_j \quad \text{on $\Omega \times  \Gamma_j$}, \hspace{1cm} u_j \nu_j = 0 \quad \text{on $\partial \Omega$}, 
\end{aligned}\right.  \\
&\text{and} \hspace{10pt}
u_0 = 0 \quad \text{on $\Omega \times  \Gamma_i$, for $i \in \I \backslash \I^\bftheta$.}
\end{aligned}
\end{equation}
Here
$$
\begin{aligned}
\T_j(u)(x) = \int_{\Gamma_j} a_0(y) \bfnabla_y u(x,y) \cdot \bfn(y) \, {\rm d}S(y), \qquad \langle f \rangle_j(x) = \int_{C_j} f(x,y) \, {\rm d}y,
\end{aligned}
$$ for $\bfn$ the outer unit normal of $\Gamma_j = \partial C_j \backslash \partial Q$.
We now state the main result of the article.
\begin{theorem}
	\label{blochhomogenisation}
Consider $f_\ep \in L^2(\Omega)$, $f \in L^2(\Omega \times Q)$ such that $f_\ep \qp f$, and $u_\ep$ the solution to \eqref{resolventproblem}. Then $u_\ep$ converges, up to some subsequence, in the ${\bftheta}$-quasi-periodic sense to  $u \in U_\bftheta$ the unique solution to  \eqref{qphomlimit0}, equivalently \eqref{representation}.
\end{theorem}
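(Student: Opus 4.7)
The plan is to adapt the standard two-scale convergence argument to the $\bftheta$-quasi-periodic framework of Section \ref{sec:qpcon}. The argument has three stages: extract a two-scale limit from the a priori bounds \eqref{apbounds}; show that it lies in $U_\bftheta$; and identify it as the unique solution of \eqref{qphomlimit0} by passing to the limit in \eqref{resolventproblem} against a suitably constructed class of oscillating test functions.

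\emph{Compactness and structure of the limit.} By the bounds \eqref{apbounds} and the $\bftheta$-quasi-periodic compactness result of Section \ref{sec:qpcon}, there is a subsequence along which $u_\ep \qp u$ and $\ep \bfnabla u_\ep \qp \bfnabla_y u$ for some $u \in L^2(\Omega; H^1_\bftheta(Q))$. The bound on $\sqrt{a_1}(\cdot/\ep)\bfnabla u_\ep$, together with $a_1^{-1} \in L^\infty(Q_1)$, forces $\bfnabla_y u = 0$ on every $C_i$, so $u(x,\cdot)$ is constant on each cylinder. Since each $C_i$ spans the unit cell in the $y_i$-direction, the quasi-periodic relation $u(x,y+\bfe_i) = e^{\i\theta_i}u(x,y)$ forces this constant to vanish whenever $\theta_i \neq 0$; hence $u|_{\Omega\times C_i} = u_i(x)$ for $i \in \I_\bftheta$ and $u|_{\Omega\times C_i} = 0$ otherwise. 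For $i \in \I_\bftheta$, the stiff bound shows $\chi_{C_i}(\cdot/\ep)\partial_{x_i} u_\ep$ is bounded in $L^2(\Omega)$ and its two-scale limit equals $\chi_{C_i}(y)\partial_{x_i} u_i(x)$, yielding $\partial_{x_i} u_i \in L^2(\Omega)$. An integration by parts against $\phi \in C^\infty(\overline\Omega)$, legitimate since $u_\ep \in H^1_0(\Omega)$, and passage to the two-scale limit gives the weak trace identity $u_i \nu_i = 0$ on $\partial\Omega$; thus $u \in U_\bftheta$.

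\emph{Passing to the limit.} For a smooth $\phi \in U_\bftheta$ of the form $\phi(x,y) = \phi_0(x,y) + \sum_{i \in \I_\bftheta} \phi_i(x) \chi_{C_i}(y)$ with $\phi_0$ smooth, $\bftheta$-quasi-periodic in $y$ and supported in $\Omega \times \overline{Q_0}$, I would test \eqref{resolventproblem} against the sequence
$$
\phi_\ep(x) = \phi_0\bigl(x, \tfrac{x}{\ep}\bigr) + \sum_{i \in \I_\bftheta}\phi_i(x)\chi_{C_i}\bigl(\tfrac{x}{\ep}\bigr) + \ep \sum_{i \in \I_\bftheta}\phi_i(x) N^{(i)}\bigl(\tfrac{x}{\ep}\bigr)\chi_{C_i}\bigl(\tfrac{x}{\ep}\bigr),
$$
where $N^{(i)}$ is the corrector of \eqref{fluxfinal}, suitably smoothed across $\Gamma$ and cut off near $\partial\Omega$ to stay in $H^1_0(\Omega)$. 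Expanding $\bfnabla \phi_\ep$, the cell equation \eqref{fluxfinal} combines with the stiff contribution $\int_\Omega a_1(\cdot/\ep)\bfnabla u_\ep \cdot \overline{\bfnabla \phi_\ep}$ to produce the effective coefficients $a_i^{\rm hom}$ of \eqref{ahom} in each cylinder; the inclusion contribution $\int_\Omega \ep^2 a_0(\cdot/\ep)\bfnabla u_\ep \cdot \overline{\bfnabla \phi_\ep}$ converges to $\int_\Omega \int_{Q_0} a_0 \bfnabla_y u \cdot \overline{\bfnabla_y \phi}$ via $\ep \bfnabla u_\ep \qp \bfnabla_y u$; and the zero-order and right-hand-side terms pass via $u_\ep \qp u$ and $f_\ep \qp f$. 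This yields \eqref{qphomlimit0}, and coercivity of its left-hand side on $U_\bftheta$ pins down $u$ uniquely, independently of the extracted subsequence.

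\emph{Main obstacle.} The delicate step is the construction of the test sequence $\phi_\ep$. It must be $H^1_0(\Omega)$, continuous across $\Gamma$ so that the cylinder values match the $\bftheta$-quasi-periodic $\phi_0$ on $Q_0$, carry the correctors $N^{(i)}$ on each $C_i$ at order $\ep$, and retain the correct two-scale limit after a Dirichlet cutoff near $\partial\Omega$. The cutoff interacts with the quasi-periodic phase factor $e^{\i\bftheta\cdot x/\ep}$ when $\bftheta \neq \bfzero$, and verifying that the resulting family is rich enough to span a dense subset of $U_\bftheta$ demands a careful density argument adapted to the $\bftheta$-quasi-periodic setting. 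All other steps are essentially routine adaptations of the Zhikov double-porosity argument once the limit space $U_\bftheta$ and the compactness in Section \ref{sec:qpcon} are in place.
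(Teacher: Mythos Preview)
Your overall strategy---oscillating test functions carrying the cell correctors $N^{(i)}$---is a legitimate alternative to the paper's route, which instead extracts the weak two-scale limit $\bfxi$ of $\sqrt{a_1}(\cdot/\ep)\bfnabla u_\ep$, characterises it via the Weyl-type decomposition $L^2(Q;\CC^3) = \ker\big((\sqrt{a_1}\bfnabla_\bftheta)^*\big) \oplus {\rm Ran}(\sqrt{a_1}\bfnabla_\bftheta)$ together with a flux identity (Lemmas~\ref{lem:Weylsdecom}--\ref{lem:qpcompactness3}, Proposition~\ref{corhommatrix}), and then passes to the limit with the bare test functions $\phi(x,x/\ep)$, no corrector attached. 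The paper's approach thereby sidesteps precisely the construction you flag as the ``main obstacle''.

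Two steps of your sketch are, however, incorrect as written. First, the assertion that the two-scale limit of $\chi_{C_i}(\cdot/\ep)\partial_{x_i}u_\ep$ equals $\chi_{C_i}(y)\partial_{x_i}u_i(x)$ is false: the paper's formula \eqref{fluxrep} gives the limit on $C_i$ as $\partial_{x_i}u_i(x)\big[\partial_{y_i}N^{(i)}(y)+1\big]$, and $N^{(i)}$ is generically nonconstant in $y_i$. You can still recover $\partial_{x_i}u_i\in L^2(\Omega)$ by testing only against functions independent of $y_i$ (exploiting that $\chi_{C_i}$ itself does not depend on $y_i$) and reading off the $C_i$-average, but the pointwise identification you claim is unavailable without the flux analysis. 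Second, your corrector term $\ep\,\phi_i(x)N^{(i)}(x/\ep)$ is the wrong one: its gradient produces $\phi_i(x)\bfnabla_y N^{(i)}$, which cannot be combined with $\bfnabla_x\phi_i$ through the cell equation \eqref{fluxfinal} to yield $a^{\rm hom}_i\partial_{x_i}u_i\,\overline{\partial_{x_i}\phi_i}$. The corrector must be $\ep\,\partial_{x_i}\phi_i(x)\,N^{(i)}(x/\ep)$, so that its gradient contributes $\partial_{x_i}\phi_i(x)\bfnabla_y N^{(i)}$ and the stiff integrand becomes $a_1\bfnabla u_\ep\cdot\overline{\partial_{x_i}\phi_i\,[\bfe_i+\bfnabla_y N^{(i)}]}$; only then does the divergence-free structure of $a_1[\bfe_i+\bfnabla_y N^{(i)}]$ coming from \eqref{fluxfinal} allow you to integrate by parts and pass to the limit. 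With these two fixes your route can be completed, but the paper's flux-projection method is cleaner here precisely because it avoids the smoothing, cutoff and density issues you rightly identify as delicate.
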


An immediate consequence of Theorem \ref{blochhomogenisation} is that for each $\bftheta \in [0,2\pi)^3$, the operator $A_\ep$ strong resolvent $\bftheta$-quasi-periodically two-scale converges to the operator $A^{\rm hom}_\bftheta$ associated to problem \eqref{representation}, see Definition \ref{stopcon} in Section \ref{sec:qpcon}. Consequently, Proposition \ref{tsrescon} informs us that the lower semi-continuity of the spectra in the Hausdorff sense is ensured:
$$
\text{ for every $\lambda \in \bigcup_{\bftheta \in [0,2\pi)^3} \sigma(A^{\rm hom}_\bftheta)$ \quad $\exists\,  \lambda_\ep \in \sigma(A_\ep)$ such that $ \lim_{\ep \rightarrow 0 } \lambda_\ep =\lambda$.}
$$
The structure of the limit spectrum $\bigcup\limits_{\bftheta \in [0,2\pi)^3} \sigma(A^{\rm hom}_\bftheta)$ is analysed in Section \ref{sec:limspec} and described in Proposition \ref{prop:limitspecchar}.

\begin{remark}
A seperate issue, not explored here, is the so-called spectral completeness statement, i.e. the question of whether or not the remaining criterion for Hausdorff convergence of spectra is satisfied: does it follow that
$$
\text{ for every $\lambda_\ep$ such that $\lim_{\ep \rightarrow 0}\lambda_\ep =\lambda$, then $\lambda \in \bigcup_{\bftheta \in [0,2\pi)^3} \sigma(A^{\rm hom}_\bftheta)$  } \ ?
$$
In general this will not be true due to the presence of the boundary, and the fact that $Q_0$ intersects the boundary. This leads to the expectation that there exists non-trivial spectrum due to surface waves asymptotically localised near the boundary, cf. \cite{AlCo} for analoguous results in the context of classical locally periodic media. For the case of $\Omega$ being the torus or the whole space the above assertion is expected to hold and will be explored in future works. 
\end{remark}

\section{Proof of the homogenisation theorem}
\label{sec:proof}
\label{proofthm}
This section is dedicated to the proof of Theorem \ref{blochhomogenisation}. To do this, we shall develop an appropriate  quasi-periodic two-scale variation of a powerful method first introduced in \cite{IVKVPS} in the context of standard (periodic) two-scale convergence, i.e.  ${\bftheta}$-quasi-periodic two-scale convergence for ${\bftheta} = 0$.  In what follows $\phi$, $\varphi$ and $\bfPhi$ will denote respectively fixed arbitrary elements of $C^\infty_0(\Omega)$, $C^\infty_{\bftheta}(Q)$ and $C^\infty_{\bftheta}(Q;\CC^3)$.

\subsection{Technical preliminaries}

The following results will be of importance in the proof of the homogenisation theorem.

\begin{lemma}
	\label{extension}
	Let $B$ be the closure of a smooth domain and let $B_1$ be a smooth bounded domain such that $B \subset B_1$ and $A = B_1 \backslash B$ is a connected set. Then every $u \in H^1\big((0,1)\times A\big)$  can be extended to $(0,1)\times B_1$ as a function $\widetilde{u} \in H^1\big((0,1) \times B_1\big)$ such that
	\begin{equation}
	\label{ourext1}
	\begin{aligned}
	\int_{(0,1)\times B_1} | \bfnabla \widetilde{u} |^2 &  \le c  \int_{(0,1)\times A} | \bfnabla u |^2 , \\
	\int_{(0,1)\times B_1} |  \widetilde{u} |^2&  \le c  \int_{(0,1)\times A} | u |^2,
	\end{aligned}
	\end{equation}
	where $c$ does not depend on $u \in H^1((0,1)\times A)$.
\end{lemma}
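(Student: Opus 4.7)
The plan is to exploit the product structure of $(0,1)\times B_1$ by constructing a linear extension operator $E_0 \colon H^1(A) \to H^1(B_1)$ that acts only in the cross-section variable, and then applying it slice-by-slice in the $t$-direction. Since $E_0$ does not depend on $t$, it commutes with $\partial_t$, so both inequalities in \eqref{ourext1} reduce, via Fubini's theorem, to the two slice-wise estimates $\|E_0 v\|_{L^2(B_1)} \le c\|v\|_{L^2(A)}$ and $\|\nabla E_0 v\|_{L^2(B_1)} \le c\|\nabla v\|_{L^2(A)}$.

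The construction of $E_0$ proceeds in two steps. First, since $B$ and $B_1$ are smooth, $A = B_1\setminus B$ has Lipschitz boundary $\partial B\cup\partial B_1$, so the classical Stein extension theorem furnishes a bounded linear operator $\mathcal{E} \colon W^{k,p}(A) \to W^{k,p}(\RR^n)$ for every $k,p$; restricting to $B_1$ then yields bounded maps $H^1(A) \to H^1(B_1)$ and $L^2(A) \to L^2(B_1)$. This alone provides only the mixed estimate $\|\mathcal{E} v\|_{H^1(B_1)} \le C\|v\|_{H^1(A)}$, which is insufficient for the gradient-only bound in \eqref{ourext1}. To refine it, I set $\bar v := |A|^{-1}\int_A v$ and define $E_0 v := \bar v + \mathcal{E}(v - \bar v)\big|_{B_1}$; note that $E_0 v = v$ on $A$. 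Since $A$ is connected, the Poincar\'{e} inequality gives $\|v - \bar v\|_{L^2(A)} \le C_P\|\nabla v\|_{L^2(A)}$, hence $\|v - \bar v\|_{H^1(A)} \le C\|\nabla v\|_{L^2(A)}$. Combined with Stein's bound, this delivers $\|\nabla E_0 v\|_{L^2(B_1)} = \|\nabla \mathcal{E}(v - \bar v)\|_{L^2(B_1)} \le c\|\nabla v\|_{L^2(A)}$; the triangle inequality, Cauchy-Schwarz on $\bar v$, and $L^2$-boundedness of $\mathcal{E}$ jointly yield $\|E_0 v\|_{L^2(B_1)} \le c\|v\|_{L^2(A)}$.

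For $u \in H^1((0,1)\times A)$ I would then set $\widetilde u(t,y) := (E_0 u(t,\cdot))(y)$. For smooth $u \in C^\infty([0,1]\times\overline A)$, which is dense in $H^1((0,1)\times A)$, linearity of $E_0$ gives $\partial_t \widetilde u(t,y) = (E_0 \partial_t u(t,\cdot))(y)$, and the slice-wise estimates together with Fubini produce
\begin{align*}
\int_0^1\!\!\int_{B_1}|\widetilde u|^2\,dy\,dt &\le c\int_0^1\!\!\int_A |u|^2\,dy\,dt,\\
\int_0^1\!\!\int_{B_1}|\bfnabla \widetilde u|^2\,dy\,dt &\le c\int_0^1\!\!\int_A|\bfnabla u|^2\,dy\,dt;
\end{align*}
a density argument then promotes these bounds to all of $H^1((0,1)\times A)$. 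The main obstacle is the gradient-only estimate: a direct application of Stein's extension only produces a bound involving the full $H^1$-norm of $u$ on the right-hand side. The mean-correction device circumvents this, and it is precisely here that the connectedness of $A$ — through the Poincar\'{e} inequality for zero-mean functions — plays its essential role; without connectedness, only a mixed bound could be established.
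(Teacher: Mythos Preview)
Your proof is correct and follows essentially the same approach as the paper: apply a cross-sectional extension operator slice by slice and exploit its linearity and $t$-independence to control the longitudinal derivative. The only technical differences are that the paper cites the JKO extension lemma directly---whose proof is precisely your Stein-plus-mean-subtraction construction---and handles $\partial_{x_1}$ via difference quotients applied to the general $u$ rather than via density of smooth functions.
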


\begin{proof}
	Suppose $u \in H^1\big((0,1)\times A\big)$. Then, by Fubini's theorem, for almost every $x_1 \in (0,1)$ the function $u(x_1,\cdot)$ belongs to $H^1(A)$ and let $Eu(x_1,\cdot)$ be the Sobolev extension of $u(x_1,\cdot)$ into $B_1$ given in \cite[Lemma 3.2, pg. 88]{JKO}. In particular, one has 
	\begin{equation}
	\label{jkoext1}
	\begin{aligned}
	\int_{B_1} | \bfnabla' Eu (x_1,\cdot)|^2  &  \le c  \int_{A} | \bfnabla' u(x_1,\cdot) |^2, \\
	\int_{B_1} |  Eu(x_1,\cdot) |^2 &  \le c  \int_{A} | u(x_1,\cdot) |^2,
	\end{aligned}
	\end{equation}
	where $c$ does not depend on $u$ nor $x_1$. Here,  $\bfnabla'$ denotes the gradient vector $(0,\partial_{x_2}, \partial_{x_3})$. 
	
	Consider $\widetilde{u}$ given by
	\begin{equation}
	\label{mapping}
	\widetilde{u(x_1,\cdot)} : = Eu(x_1,\cdot), \qquad \text{ a.e. } x_1 \in (0,1).
	\end{equation}
	Then $\bfnabla ' \widetilde{u(x_1 , \cdot)} =\bfnabla ' Eu(x_1, \cdot)  $ and from assertion \eqref{jkoext1} it follows that
	$$
	\begin{aligned}
	\int_{(0,1)\times B_1} | \bfnabla' \widetilde{u} |^2  &\le c \int_{(0,1)\times A} | \bfnabla' u |^2 , \\
	\int_{(0,1)\times B_1} |  \widetilde{u} |^2  &\le c \int_{(0,1)\times A} |  u |^2.
	\end{aligned}
	$$
	To prove \eqref{ourext1}, it remains to demonstrate that $\partial_{x_1} \widetilde{u} \in L^2\big((0,1)\times B_1\big)$ and
	\begin{equation}
	\label{extx1}
	\begin{aligned}
	\int_0^1 \int_{B_1} | \partial_{x_1} \widetilde{u} |^2 (x_1,x') \, {\rm d}x'{\rm d}x_1 & \le c \int_0^1 \int_{A} | \partial_{x_1} u |^2 (x_1,x') \, {\rm d}x'{\rm d}x_1.
	\end{aligned}
	\end{equation}
	For each $t \in \RR$, the difference quotient is given by
	$$
	D_t\widetilde{u}(x_1,x') : = \frac{\widetilde{u}(x_1 + t, x') - \widetilde{u}(x_1,x')}{t},
	$$
	where we have extended $\widetilde{u}$ trivially by zero into $\RR \backslash(0,1)$.
	Notice that  $D_t\widetilde{u}= E D_t  u$, i.e. the extension into $B_1$ of the function $\tfrac{u(x_1 + t, \cdot) - u(x_1,\cdot)}{t} = D_tu(x_1+t,\cdot)$, and consequently
	$$
	\int_0^1 \int_{B_1} | D_t \widetilde{u}  |^2 (x_1,x')\, {\rm d}x'{\rm d}x_1 \le c \int_0^1 \int_{A} |  D_tu |^2 (x_1,x')\, {\rm d}x'{\rm d}x_1. 
	$$
	Since $D_t u$ converges strongly in $L^2\big( (0,1) \times A \big)$ to $\partial_{x_1} u$, it follows that $D_t \widetilde{u}$ is a Cauchy sequence in $L^2\big((0,1)\times B_1\big)$ and this limit can be identified, using the fact that $(D_t \widetilde{u}, \phi )_{L^2} = ( \widetilde{u}, D_{-t} \phi)_{L^2}$ for $\phi \in C^\infty_0\big((0,1)\times B_1\big)$, as $\partial_{x_1} \widetilde{u}$.
	Furthermore, passing to the limit in the above inequality yields \eqref{extx1}.
\end{proof}

\begin{proposition}
	\label{prop:ka}
	Fix ${\bftheta} \in [0,2\pi)^3$. There exists a constant $C=C_{\bftheta} >0$ such that
	$$
	\inf_{v \in V_{\bftheta}}|| u - v ||_{H^1_{\bftheta}(Q)} \le C_{\bftheta}  || \sqrt{a_1} \bfnabla u||_{L^2(Q)}, \qquad \forall u \in H^1_{\bftheta}(Q).
	$$
	Here $V_{\bftheta} = {\rm ker} \sqrt{a_1} \bfnabla_{\bftheta} = \{ u \in H^1_{\bftheta}(Q) \, \big| \, \sqrt{a_1} \bfnabla u \equiv 0 \}$.
\end{proposition}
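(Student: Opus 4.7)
The strategy is to construct, for given $u \in H^1_\bftheta(Q)$, an approximant $v \in V_\bftheta$ by declaring $v$ to be an appropriate constant on each cylinder $C_i$, and then extending this definition across $Q_0$ while preserving $\bftheta$-quasi-periodicity. Since $a_1^{-1} \in L^\infty(Q_1)$ and $a_1 \equiv 0$ on $Q_0$, one has
\begin{equation*}
\| \bfnabla u \|_{L^2(Q_1)} \le c\, \| \sqrt{a_1}\,\bfnabla u \|_{L^2(Q)},
\end{equation*}
so it is enough to control $\| u - v \|_{H^1_\bftheta(Q)}$ by $\| \bfnabla u \|_{L^2(Q_1)}$.

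For each $i \in \I$, set $c_i := |C_i|^{-1} \int_{C_i} u$ if $\theta_i = 0$, and $c_i := 0$ if $\theta_i \neq 0$. When $\theta_i = 0$ the Poincar\'e--Wirtinger inequality on the bounded connected domain $C_i$ yields $\| u - c_i \|_{H^1(C_i)} \le C \| \bfnabla u \|_{L^2(C_i)}$. When $\theta_i \neq 0$ the $\theta_i$-quasi-periodicity of $u$ in $y_i$ (inherited from $u \in H^1_\bftheta(Q)$, since $C_i$ touches $\partial Q$ only on the faces $\{y_i \in \{0,1\}\}$) yields, for a.e.\ $y' \in S_i$, $u(1,y') = \exp(\i \theta_i) u(0, y')$. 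The fundamental theorem of calculus then gives $(\exp(\i \theta_i) - 1) u(0,y') = \int_0^1 \partial_{y_i} u(s,y')\,{\rm d}s$, and an elementary one-dimensional argument produces the quasi-periodic Poincar\'e inequality $\| u \|_{L^2(C_i)} \le C(\theta_i) \| \partial_{y_i} u \|_{L^2(C_i)}$, with $C(\theta_i) = O(|\exp(\i\theta_i)-1|^{-2})$. In either case, $w_i := u|_{C_i} - c_i$ satisfies $\| w_i \|_{H^1(C_i)} \le C_\bftheta \| \bfnabla u \|_{L^2(C_i)}$ and retains the $\theta_i$-quasi-periodicity of $u$ in the $y_i$ direction.

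Since the smooth cross-section $S_i$ is compactly contained in $(0,1)^2$, fix a bounded two-dimensional Sobolev extension $E_i : H^1(S_i) \to H^1((0,1)^2)$ and a smooth cutoff $\chi_i \in C^\infty_0((0,1)^2)$ with $\chi_i \equiv 1$ on $\bar S_i$ and $\operatorname{supp}(\chi_i)$ disjoint from $\bar S_j$ for $j \neq i$. Define $\tilde w_i(y_i, y') := \chi_i(y')\, E_i(w_i(y_i, \cdot))(y')$. By a product-of-extensions and difference-quotient argument analogous to the proof of Lemma \ref{extension}, $\tilde w_i$ belongs to $H^1(Q)$ with $\| \tilde w_i \|_{H^1(Q)} \le c\, \| w_i \|_{H^1(C_i)}$. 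Linearity of $E_i$ transfers the $\theta_i$-quasi-periodicity of $w_i$ in $y_i$ to $\tilde w_i$, and the transverse compact support of $\tilde w_i$ inside $(0,1)^2$ makes $\tilde w_i$ vanish on each face $\{y_j = 0,1\}$ for $j \neq i$, so the remaining $\theta_j$-quasi-periodicity conditions are trivial. Hence $\tilde w_i \in H^1_\bftheta(Q)$.

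Finally, set $v := u - \sum_{i \in \I} \tilde w_i \in H^1_\bftheta(Q)$. Because the supports of the $\tilde w_i$ are pairwise disjoint and each $\tilde w_i$ equals $w_i$ on $C_i$, we have $v|_{C_j} = u|_{C_j} - w_j = c_j$ for every $j \in \I$, so $v$ is constant on every cylinder (and vanishes on those $C_j$ with $\theta_j \neq 0$), placing it in $V_\bftheta$. Summing the estimates yields $\| u - v \|_{H^1_\bftheta(Q)} \le C_\bftheta \| \bfnabla u \|_{L^2(Q_1)} \le C_\bftheta \| \sqrt{a_1}\,\bfnabla u \|_{L^2(Q)}$, as required. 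The main obstacle is the extension step: producing an element of $H^1_\bftheta(Q)$ that equals $w_i$ on $C_i$ while respecting the (possibly nontrivial) quasi-periodicity. The crucial feature is that the longitudinal and transverse directions decouple---extending only in the cross-section commutes with multiplication by the phase $\exp(\i \theta_i y_i)$---which is what permits the construction to land inside $H^1_\bftheta(Q)$ and which accounts for the $\bftheta$-dependence (through $E_i$ and the constant $C(\theta_i)$) of the final constant.
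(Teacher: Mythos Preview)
Your argument is correct and follows essentially the same route as the paper: subtract the appropriate constant on each cylinder (mean when $\theta_i=0$, zero otherwise), extend the resulting functions slice-wise across the cross-section using a bounded 2D Sobolev extension and a transverse cutoff (exactly the mechanism behind Lemma~\ref{extension}), and take $v=u-\sum_i \tilde w_i$. The only cosmetic difference is that the paper applies the Poincar\'e/quasi-periodic Poincar\'e inequality \emph{after} extension (on the enlarged cylinders $D_i$) rather than before; both orderings yield the same estimate.
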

\begin{proof}
	For each $i \in \I$, let $S_i$ be the cross-section of the cylinder $C_i$. Since $S_i$ are compactly contained in $(0,1)^2$ and have mutually disjoint closures then there exists open $A_i$ such that $\overline{{S_i}} \subset A_i\subset (0,1)^2$ and $A_i $ are mutually disjoint. Let  $\chi_i \in C^\infty_0(A_i)$ be smooth cut-off functions that are identity on $S_i$, we extend $\chi_i$ by zero to $(0,1)^2$.
	
	Now using Lemma \ref{extension}, let $\widetilde{u_i}$ be the extension of $u\vert_{C_i} \in H^1(C_i)$ to $H^1(D_i)$, where $D_i$ is the cylinder whose axis is parallel to $y_i$ with cross-section $A_i$. Note that since $u$ is $\theta_i$-quasi-periodic in the variable $y_i$, then the extension will be also, see \eqref{mapping}. By Lemma \ref{extension} it follows that
	\begin{equation}
	\label{kape2}
	\int_{D_i} | \bfnabla \widetilde{u_i} |^2   \le c  \int_{C_i} | \bfnabla u_i |^2.
	\end{equation}
	For $\theta_i \neq 0$,  the following Poincar\'{e} inequality 
	\begin{equation}
	\label{pitheta}
	\int_{D_i} | \widetilde{u_i} |^2 \le | \theta_i |^{-2} \int_{D_i} | \bfnabla \widetilde{u_i} |^2 
	\end{equation}
	holds\footnote{This follows from noting the lower bound on the spectrum of the laplacian on the space of $H^1\bftheta(D_i)$ functions that are $\theta_i$-quasi-periodic in direction $y_i$.}. For $\theta_i = 0$, one has
	\begin{equation}
	\label{pi0}
	\int_{D_i} | \widetilde{u_i} - \langle\widetilde{u_i}\rangle |^2 \le C \int_{D_i} | \bfnabla \widetilde{u_i} |^2 
	\end{equation}
	for some $C>0$. Here $\langle \widetilde{u_i} \rangle := \tfrac{1}{D_i}\int_{D_i} \widetilde{u_i}$.
	
	Recalling, $\I^{\bftheta} = \{ i \in \I \, | \, \theta_i=0\}$, we set $\widetilde{u} = \sum_{i \in \I^\bftheta}  \chi_i ( \widetilde{u_i} - \langle \widetilde{u_i} \rangle ) + \sum_{i \in \I \backslash \I^\bftheta} \chi_i\widetilde{u_i}$, here $\chi_i$ are taken to be constant in the variable $y_i$ and as above  the complementary directions. It follows that $\widetilde{u} \in H^1_\bftheta(Q)$ and $u - \widetilde{u} \in V_\bftheta$. Note that, by construction and inequalities \eqref{pitheta}, and \eqref{pi0}, one has
	$$
	||  \widetilde{u} ||^2_{H^1(Q)} \le c_\bftheta \sum_{i \in \I} || \bfnabla \widetilde{u_i} ||^2_{L^2(D_i)}.
	$$
	Now, the positivity of $a_1$ on $Q_1$ and \eqref{kape2} imply that the element $v : = u - \widetilde{u}$ of  $V_\bftheta$ is such that 
	$$
	|| u - v ||^2_{H^1(Q)} = || \widetilde{u} ||^2_{H^1(Q)} \le C_\bftheta \int_{Q_1} a_1 | \bfnabla u |^2,
	$$
	and the result follows.

\end{proof}
\subsection{Proof of Theorem \ref{blochhomogenisation}}

Consider the sequence $u_\ep$ of solutions to \eqref{resolventproblem}, i.e.
\begin{equation}
\label{resolventproblem1}
\begin{aligned}
\int_{\Omega} \big(a_1 (\tfrac{x}{\ep}) + \ep^2 a_0 (\tfrac{x}{\ep})\big) \bfnabla u_\ep(x) & \cdot \overline{\bfnabla \phi}(x) \, {\rm d}x \ + \int_{\Omega} u_\ep(x)  \overline{\phi}(x) \, {\rm d}x  = \int_{\Omega} f_\ep(x)  \overline{\phi}(x) \, {\rm d}x, \qquad \forall \phi \in C^\infty_0(\Omega).
\end{aligned}
\end{equation}
for $f_\ep \qp f$, and recall, cf. \eqref{apbounds}, that 
\begin{equation}\label{apbounds1}
\begin{aligned}
\sup_{\ep} \left( || \sqrt{a_1}(\tfrac{\cdot}{\ep}) \bfnabla u_\ep ||_{L^2(\Omega;\CC^3)} + || \ep \bfnabla u_\ep ||_{L^2(\Omega;\CC^3)} + \ \ || u_\ep ||_{L^2(\Omega)} \right) < \infty.
\end{aligned}
\end{equation}
Consequently, Proposition \ref{prop:hc} informs us that a subsequence of $u_\ep$ $\bftheta$-quasi-periodic two-scale converges to some $u \in L^2(\Omega;H^1_\bftheta(Q))$, and moreover $\ep \bfnabla u_\ep \qp \bfnabla_y u$. Let us study the structure of this limit $u$ further. 

We begin by introducing the densely defined unbounded linear operator $\sqrt{a_1} \bfnabla_{\bftheta} :H^1_{\bftheta}(Q) \subset L^2(Q) \rightarrow L^2(Q;\CC^3)$ which is given by the action 
$$
w \mapsto \sqrt{a_1} \bfnabla w , \qquad \text{ for $w \in H^1_{\bftheta}(Q)$}.
$$
We now argue that a generalised Weyl's decomposition holds, which was first introduced and proved for the case ${\bftheta} =0$ in \cite{IVKVPS}.
\begin{lemma}
	\label{lem:Weylsdecom}
 Let  $(\sqrt{a_1} \bfnabla_\bftheta)^*$ denote the adjoint of $\sqrt{a_1} \bfnabla_\bftheta$. Then, the orthogonal decomposition 
\[
	L^2(Q;\CC^3) = {\rm ker}\big(  (\sqrt{a_1} \bfnabla_{\bftheta})^* \big) \oplus {\rm Ran}( \sqrt{a_1} \bfnabla_{\bftheta})\]
holds.
\end{lemma}
\begin{remark}
Lemma \ref{lem:Weylsdecom} is a generalisation of the well-known fact that (periodic) divergence-free vector fields are mutually orthogonal to gradients of (periodic) potentials in $L^2$.  In fact, this classical result can be deduced from the above lemma by (formally)\footnote{In fact, as expected the proof of this statement for $I$ is much easier as $I$ is positive where as $\sqrt{a_1}$ is non-negative.} setting $\sqrt{a_1} = I$ on $\square$.
\end{remark}
\begin{proof}[Proof of Lemma \ref{lem:Weylsdecom}] By the Banach closed ranged theorem, this result will follow if we demonstrate that the range of $ \sqrt{a_1} \bfnabla_{\bftheta} $ is closed, and this fact is implied by Proposition \ref{prop:ka}.
	
	Indeed, suppose $u_n \in {\rm Ran}( \sqrt{a_1} \bfnabla_{\bftheta})$ converges strongly in $L^2(Q;\CC^3)$ to some $u$ as $n \rightarrow \infty$, i.e. there exists $w_n \in H^1_{\bftheta}(Q)$ such that $\sqrt{a_1} \bfnabla w_n$ converges strongly in $L^2(Q;\CC^3)$ to $u$. By Proposition \ref{prop:ka}, the sequence $w_n^\perp$, where $w_n^\perp$ denotes the orthogonal projection of $w_n$ onto  the orthogonal complement $V^\perp_{\bftheta}$ of $V_{\bftheta}$ in $H^1_{\bftheta}(Q)$, is a Cauchy sequence in $H^1_{\bftheta}(Q)$ and therefore converges, up to some subsequence, to $w \in H^1_{\bftheta}(Q)$. In particular, $\sqrt{a_1} \bfnabla w_n = \sqrt{a_1} \bfnabla w^\perp_n$ converges strongly in $L^2(Q)$ to $\sqrt{a_1} \bfnabla w$ and, consequently $u =\sqrt{a_1} \bfnabla w$. Hence, the range of $\sqrt{a_1} \bfnabla_{\bftheta}$ is closed.
\end{proof}
Let us now describe $u$ in detail.
\begin{lemma}
	\label{lem:qpcompactness1}
	The function $u$ belongs to the Bochner space $L^2(\Omega; V_\bftheta)$. 
\end{lemma}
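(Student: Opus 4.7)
The plan is to show that $\bfnabla_y u \equiv 0$ on $\Omega \times Q_1$, from which membership in $L^2(\Omega; V_\bftheta)$ follows immediately since each cylinder $C_i$ is connected (so a $H^1$ function with vanishing gradient on $C_i$ is a.e. constant there). The two ingredients I will exploit are: (i) the a priori bound $\sup_\ep \|\sqrt{a_1}(\cdot/\ep) \bfnabla u_\ep\|_{L^2(\Omega;\CC^3)} < \infty$ from \eqref{apbounds1}, combined with $a_1^{-1} \in L^\infty(Q_1)$; and (ii) the quasi-periodic two-scale convergence $\ep \bfnabla u_\ep \qp \bfnabla_y u$ supplied by Proposition \ref{prop:hc}.

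First, I observe that since $a_1^{-1} \in L^\infty(Q_1)$ and $a_1 \equiv 0$ on $Q_0$, the a priori bound on $\sqrt{a_1}(\cdot/\ep)\bfnabla u_\ep$ upgrades to a uniform $L^2$-bound on $\mathbf{1}_{Q_1}(\cdot/\ep) \bfnabla u_\ep$. Multiplying by $\ep$ therefore yields $\ep \mathbf{1}_{Q_1}(\cdot/\ep) \bfnabla u_\ep \to 0$ strongly in $L^2(\Omega;\CC^3)$.

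Next, I test the quasi-periodic two-scale limit $\ep \bfnabla u_\ep \qp \bfnabla_y u$ against functions of the form $\phi(x)\bfPsi(x/\ep)$, where $\phi \in C^\infty_0(\Omega)$ and $\bfPsi \in C^\infty_\bftheta(Q;\CC^3)$ is chosen to vanish in a neighbourhood of $\overline{Q_0}$ so that $\mathrm{supp}\,\bfPsi \subset Q_1$. For such test functions, the integrand on the $\ep$-side is supported on $\{x : x/\ep \bmod 1 \in \mathrm{supp}\,\bfPsi\} \subset \{x/\ep \bmod 1 \in Q_1\}$, and so by the strong convergence in step one the left-hand side tends to zero. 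On the other hand, the two-scale limit delivers $\int_\Omega\int_{Q_1} \bfnabla_y u(x,y)\cdot\overline{\phi(x)\bfPsi(y)}\,{\rm d}y{\rm d}x$, forcing this integral to vanish. Density of the admissible products $\phi\otimes\bfPsi$ in $L^2(\Omega\times Q_1;\CC^3)$ then gives $\bfnabla_y u = 0$ a.e. on $\Omega \times Q_1$, and connectedness of each $C_i$ finishes the proof.

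The step I expect to require the most care is the density assertion for the test functions: one needs that smooth $\bftheta$-quasi-periodic $\CC^3$-valued functions on $Q$ whose supports are compactly contained in $Q_1$ are dense in $L^2(\Omega \times Q_1;\CC^3)$ when tensored with $C^\infty_0(\Omega)$. This is routine — multiplication by $\exp(-\i\bftheta\cdot y)$ reduces the quasi-periodic case to the periodic one (as used throughout Section \ref{sec:qpcon}), and compactly supported smooth functions in the open set $Q_1$ are dense in $L^2(Q_1)$ in the usual way — but it is the only non-algebraic point in the argument. Everything else reduces to exploiting that the high-contrast bound \eqref{apbounds1} gives genuine (not merely $O(\ep^{-1})$) control of $\bfnabla u_\ep$ on the stiff phase, which is precisely what makes oscillations in the $y$-variable disappear there in the limit.
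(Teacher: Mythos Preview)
Your proof is correct and follows essentially the same approach as the paper: both exploit the uniform bound on $\sqrt{a_1}(\cdot/\ep)\bfnabla u_\ep$ to show that $\ep\bfnabla u_\ep$ restricted to the stiff phase vanishes in the limit, then identify this limit via $\ep\bfnabla u_\ep \qp \bfnabla_y u$ to conclude $\bfnabla_y u = 0$ on $Q_1$. The only cosmetic difference is that the paper multiplies by $a_1$ and invokes Proposition~\ref{prop:qptests} to absorb $a_1\bfPhi$ as an $L^2$ test function, whereas you instead localise with smooth test functions supported in $Q_1$ and appeal to their density in $L^2(Q_1)$.
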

\begin{proof}
	Recall that
	$$
	V_{\bftheta} = \{ v \in H^1_{\bftheta}(Q) \, \big| \, \bfnabla v = 0 \text{ in $Q_1$} \} = {\rm ker} ( \sqrt{a_1} \bfnabla_{\bftheta} ),
	$$
	and so we aim to show that $\sqrt{a_1} \bfnabla_{\bftheta} u = 0$.

	On the one hand we deduce from \eqref{apbounds1} and  \eqref{mvprop}  that
	$$
	\lim_{\ep \rightarrow 0}\ep \int_{\Omega} a_1(\xoe) \bfnabla u_\ep \cdot \overline{\phi(x)\bfPhi(\xoe)} \, {\rm d}x = 0.
	$$
	Yet, on the other hand, Proposition \ref{prop:qptests} and the assertion $\ep \bfnabla u_\ep \qp \bfnabla_y u$ imply 
	$$
	\begin{aligned}
	\lim_{\ep \rightarrow 0} \ep \int_{\Omega} a_1(\xoe) \bfnabla u_\ep \cdot \overline{\phi(x)\bfPhi(\xoe)}\, {\rm d}x & =  \lim_{\ep \rightarrow 0} \int_{\Omega} \ep  \bfnabla u_\ep \cdot \overline{\phi(x)a_1(\xoe)\bfPhi(\xoe)}\, {\rm d}x \\
	& = \int_{\Omega}\int_Q a_1(y) \bfnabla_y u(x,y)\cdot \overline{\phi(x) \bfPhi(y)} \, {\rm d}x.
	\end{aligned}
	$$
	Therefore, as finite sums of $\phi(x)\bfPhi(y)$ are dense in $L^2(\Omega\times Q;\CC^3)$ it follows that $a_1 \bfnabla_y u = 0$ and since $\sqrt{a_1}^{-1} \in L^\infty(Q)$  we find that $u \in L^2(\Omega ; V_{\bftheta})$.
\end{proof}
The following result is of fundamental importance in characterising the ($\bftheta$-quasi-periodic)  limit of the flux $a_1(\tfrac{\cdot}{\ep}) \bfnabla u_\ep$ in terms of the limit $u$ of the function $u_\ep$. Put another way, this identity is crucial for determining the homogenised coefficients.
\begin{lemma}
	\label{lem:qpcompactness2}
	There exists $\bfxi \in L^2(\Omega \times Q ; \CC^3)$ such that, up to a subsequence, $\sqrt{a_1}(\tfrac{\cdot}{\ep}) \bfnabla u_\ep \qp \bfxi$. Moreover, $\bfxi$ belongs to the Bochner space $ L^2 \big(\Omega ; {\rm ker} \big( (\sqrt{a_1} \bfnabla_{\bftheta})^* \big)\big)$ and the pair $(u,\bfxi)$ satisfies the identity
	\begin{equation}
	\label{qpfluxidentity}
	\begin{aligned}
	\int_{\Omega} \int_Q \bfxi(x,y) \cdot \overline{ \phi(x) \bfPsi(y) } \, {\rm d}y{\rm d}x = - & \int_{\Omega} \int_Q \sqrt{a_1}(y) u(x,y) \overline{ \bfnabla_x \phi(x) \cdot \bfPsi(y) } \, {\rm d}y{\rm d}x, \\
	& \forall \phi \in C^\infty(\Omega), \bfPsi \in  {\rm ker} \big((\sqrt{a_1} \bfnabla_{\bftheta})^* \big).
	\end{aligned}
	\end{equation}
\end{lemma}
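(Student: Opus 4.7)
The proof breaks into three parts: compactness yielding $\bfxi$, a kernel-membership argument via $\ep$-scaled oscillating test functions, and the identity itself via integration by parts, which hinges on the fact that $\sqrt{a_1}\bfPsi$ is distributionally divergence-free whenever $\bfPsi\in\ker((\sqrt{a_1}\bfnabla_\bftheta)^*)$.

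\emph{Compactness and kernel property.} The bound \eqref{apbounds1} gives $\|\sqrt{a_1}(\cdot/\ep)\bfnabla u_\ep\|_{L^2(\Omega;\CC^3)}\le C$, so Proposition \ref{prop:qpcompact} produces, along a further subsequence, $\sqrt{a_1}(\cdot/\ep)\bfnabla u_\ep \qp \bfxi$ for some $\bfxi\in L^2(\Omega\times Q;\CC^3)$. To locate $\bfxi(x,\cdot)$ in $\ker((\sqrt{a_1}\bfnabla_\bftheta)^*)$ for almost every $x$, I would test \eqref{resolventproblem1} with $\tau_\ep(x):=\ep\,\phi(x)\,w(x/\ep)$, where $\phi\in C^\infty_0(\Omega)$ and $w\in C^\infty_\bftheta(Q)$. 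Since $\bfnabla\tau_\ep = \ep\bfnabla\phi\,w(x/\ep)+\phi\,\bfnabla_y w(x/\ep)$, the estimates \eqref{apbounds1} render all contributions $O(\ep)$ except
\[
\int_\Omega \sqrt{a_1}(\tfrac{x}{\ep})\bfnabla u_\ep\cdot\overline{\phi(x)\,\sqrt{a_1}(\tfrac{x}{\ep})\bfnabla_y w(\tfrac{x}{\ep})}\,{\rm d}x,
\]
whose limit, by Proposition \ref{prop:qptests} with the $\bftheta$-quasi-periodic $L^2$ test $\sqrt{a_1}\bfnabla_y w$, equals $\int_\Omega\int_Q \bfxi\cdot\overline{\phi\,\sqrt{a_1}\bfnabla_y w}$. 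The resulting identity forces this to vanish for every admissible $\phi$ and $w$; density in $\phi$ and in $w$ (the latter along a countable dense subset of $H^1_\bftheta(Q)$) then gives $\bfxi(x,\cdot)\perp\mathrm{Ran}(\sqrt{a_1}\bfnabla_\bftheta)$ in $L^2(Q;\CC^3)$ for almost every $x$, which is the asserted kernel membership.

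\emph{The identity \eqref{qpfluxidentity}.} Set $\mathbf{f}(y):=\sqrt{a_1}(y)\bfPsi(y)\in L^2(Q;\CC^3)$ and rewrite
\[
\int_\Omega\sqrt{a_1}(\tfrac{x}{\ep})\bfnabla u_\ep\cdot\overline{\phi(x)\bfPsi(\tfrac{x}{\ep})}\,{\rm d}x = \int_\Omega\bfnabla u_\ep\cdot\overline{\phi(x)\,\mathbf{f}(\tfrac{x}{\ep})}\,{\rm d}x.
\]
Using $u_\ep\in H^1_0(\Omega)$ and $\phi\,\mathbf{f}(\cdot/\ep)\in L^2(\Omega;\CC^3)$, the $H^1_0$--$H^{-1}$ duality recasts the right-hand side as $-\langle u_\ep, \mathrm{div}[\phi\,\mathbf{f}(\cdot/\ep)]\rangle$, and distributionally $\mathrm{div}[\phi\,\mathbf{f}(\cdot/\ep)] = \bfnabla\phi\cdot\mathbf{f}(\cdot/\ep)+\phi\,\mathrm{div}_x[\mathbf{f}(\cdot/\ep)]$. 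The key point is that the second term vanishes because $\mathbf{f}$, extended $\bftheta$-quasi-periodically to $\RR^3$, satisfies $\mathrm{div}_y\mathbf{f}=0$ in the sense of distributions on $\RR^3$ (verified below). Passing to the limit on each side via Proposition \ref{prop:qptests} (with tests $\phi\otimes\bfPsi$ on the left and $\bfnabla\phi\otimes\sqrt{a_1}\bfPsi$ on the right) then delivers \eqref{qpfluxidentity}.

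\emph{Main obstacle.} The sole delicate point is upgrading the kernel condition $\int_Q\sqrt{a_1}\bfnabla w\cdot\overline{\bfPsi}=0$ for all $w\in H^1_\bftheta(Q)$ to the distributional divergence-free statement $\mathrm{div}_y\mathbf{f}=0$ on $\RR^3$, despite $\sqrt{a_1}$ being only $L^\infty$ and $\bfPsi$ only $L^2$. My plan is to test against $\eta\in C^\infty_0(\RR^3)$ and observe that the Bloch-type periodisation $w_{-\bftheta}(y):=\sum_{k\in\ZZ^3} e^{\mathrm{i}\bftheta\cdot k}\eta(y+k)$ is $(-\bftheta)$-quasi-periodic and satisfies $\int_Q\mathbf{f}\cdot\bfnabla w_{-\bftheta} = \int_{\RR^3}\mathbf{f}\cdot\bfnabla\eta$, while taking complex conjugates in the kernel condition gives $\int_Q\mathbf{f}\cdot\bfnabla w'=0$ for every $w'\in H^1_{-\bftheta}(Q)$; combining these yields $\int_{\RR^3}\mathbf{f}\cdot\bfnabla\eta=0$ and hence the required distributional identity.
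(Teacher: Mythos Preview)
Your proof is correct and follows essentially the same route as the paper. The paper, too, extracts $\bfxi$ by compactness, obtains the kernel membership by testing \eqref{resolventproblem1} with $\ep\phi(x)\varphi(x/\ep)$, and then establishes the identity by an integration-by-parts formula whose core is the same Bloch periodisation trick you use: the paper sets $w_\ep(y)=\sum_{z}w(\ep y+\ep z)e^{-\i\bftheta\cdot z}\in H^1_\bftheta(Q)$ to show $\int_{\RR^3}\sqrt{a_1}(x/\ep)\bfnabla w\cdot\overline{\bfPsi(x/\ep)}=0$ for all $w\in H^1(\RR^3)$, which is exactly the $\ep$-rescaled version of your distributional statement $\mathrm{div}_y(\sqrt{a_1}\bfPsi)=0$ on $\RR^3$; your handling of the conjugation via $H^1_{-\bftheta}$ is the mirror image of the paper's choice to periodise into $H^1_\bftheta$ and keep the conjugate on $\bfPsi$.
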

\begin{proof}
	By Proposition \ref{prop:qpcompact} and \eqref{apbounds1} there exists $\bfxi \in L^2(\Omega \times Q;\CC^3)$ such that, up to a subsequence that we discard, one has
	\begin{align}
	\sqrt{a_1}(\tfrac{\cdot}{\ep})\bfnabla u_\ep & \qp \bfxi.\label{qplimits3}
	\end{align}{ \ }

	To prove $\bfxi \in L^2 \big(\Omega ; {\rm ker} \big( (\sqrt{a_1} \bfnabla_{\bftheta})^* \big)$, we take  in \eqref{resolventproblem1} test functions of the form $ \ep \phi(x) \varphi(\xoe)$, $\phi \in C^\infty_0(\Omega)$, $\varphi \in C^\infty_{\bftheta}(Q)$, and use \eqref{apbounds1}, \eqref{qplimits3} to pass to the limit in $\ep$ and deduce that
	$$
	\int_{\Omega} \int_Q \sqrt{a_1}(y) \bfxi(x,y) \cdot \overline{\phi(x) \bfnabla_y\varphi(y)} \, {\rm d}y{\rm d}x = 0.
	$$
	Therefore, for almost every $x \in \Omega$ one has
	$$
	\int_Q \sqrt{a_1}(y) \bfxi(x,y) \cdot \overline{ \bfnabla_y\varphi(y)} \, {\rm d}y = 0, \qquad \forall \varphi \in H^1_{\bftheta}(Q),
	$$
	and, hence by Lemma \ref{lem:Weylsdecom} it follows that $\bfxi(x,y) \in  L^2 \big(\Omega ; {\rm ker} \big( (\sqrt{a_1} \bfnabla_{\bftheta})^* \big)$.

	Let us now prove assertion \eqref{qpfluxidentity}. Henceforth, we consider $\Psi \in {\rm ker}\big( (\sqrt{a_1} \bfnabla_{\bftheta})^* \big)$ to be ${\bftheta}$-quasi-periodically extended to $\RR^3$. We shall prove below the following ``integration by parts" formula:
	\begin{equation}
	\label{qpIBP}
	\begin{aligned}
	\hspace{-5pt}\int_{\Omega} \sqrt{a_1} (\xoe) \bfnabla u_\ep (x) \cdot \overline{\phi(x) \bfPsi(\xoe)} \, {\rm d}x = & - \int_{\Omega} \sqrt{a_1} (\xoe)  u_\ep (x) \overline{\bfnabla_x \phi(x) \cdot \bfPsi(\xoe)} \, {\rm d}x, \\
	& \forall \phi \in C^\infty(\Omega), \bfPsi \in  {\rm ker} \big((\sqrt{a_1} \bfnabla_{\bftheta})^* \big).
	\end{aligned}
	\end{equation}
	Using Proposition \ref{prop:qptests}, \eqref{qplimits3} and the convergence $u_\ep \qp u$, we pass to the limit in the above formula to readily arrive at \eqref{qpfluxidentity}.
	
	To prove \eqref{qpIBP}, it is sufficient to prove the following:  for every $w \in H^1(\RR^3)$ one has  
	\begin{equation}
	\label{qpfluxe1}
	\int_{\RR^3} \sqrt{a_1} (\xoe) \bfnabla w(x) \cdot \overline{ \Psi(\xoe)} \, {\rm d}x = 0.
	\end{equation}
	
	Indeed, \eqref{qpIBP} follows from utilising \eqref{qpfluxe1} and the following facts: for $\phi \in C^\infty(\Omega)$ then $u_\ep \phi$ belongs to $H^1_0(\Omega)$, as $u_\ep \in H^1_0(\Omega)$,  and can be trivially extended to $H^1(\RR^3)$, and that
	\begin{multline*}
	\int_{\Omega} \sqrt{a_1} (\xoe) \bfnabla u_\ep (x) \cdot \overline{\phi(x) \Psi(\xoe)} \, {\rm d}x 
	= \int_{\Omega} \sqrt{a_1} (\xoe) \bfnabla (u_\ep \overline{\phi})(x)  \cdot \overline{\Psi(\xoe)} \, {\rm d}x -  \int_{\Omega} \sqrt{a_1} (\xoe) u_\ep(x) \overline{\bfnabla_x \phi(x) \cdot  \Psi(\xoe)} \, {\rm d}x.
	\end{multline*}
	
	Let us now prove \eqref{qpfluxe1}. For $Q^{(z)}_\ep = \prod_{i=1}^{3}\ep(z_i, z_i+1)$, $z \in \ZZ^3$, it follows that
	$$
	\begin{aligned}
	\int_{\RR^3} \sqrt{a_1} (\xoe) \bfnabla w(x) \cdot \overline{ \Psi(\xoe)} \, {\rm d}x & = \sum_{z \in \ZZ^3} \int_{Q^{(z)}_\ep} \sqrt{a_1} (\xoe) \bfnabla w(x) \cdot \overline{ \Psi(\xoe)} \, {\rm d}x \\ 
	&= \ep^3 \sum_{z \in \ZZ^3} \int_{Q} \sqrt{a_1} (y) \bfnabla w(\ep y+ \ep z) \cdot \overline{ \exp(\i {\bftheta} \cdot z )\Psi(y)} \, {\rm d}y,
	\end{aligned}
	$$
	where the last equality comes from the change of variables $x=\ep(y+z)$ and recalling that $a_1(y)$ is periodic and $\Psi$ is ${\bftheta}$-quasi-periodic. By noting, for $w \in H^1(\RR^3)$, that 
	$$
	w_\ep(y) : = \sum_{z \in \ZZ^3}w(\ep y+ \ep z) \exp(-\i {\bftheta} \cdot z), \qquad y \in Q,
	$$
	is an element of $H^1_{\bftheta}(Q)$, and that
	$$
	\bfnabla w_\ep(y) : = \ep \sum_{z \in \ZZ^3} \bfnabla w(\ep y+\ep z) \exp(-\i {\bftheta} \cdot z), \qquad y \in Q,
	$$
	the identity \eqref{qpfluxe1} follows.
\end{proof}
We are now ready to describe the properties of the macroscopic part of $u$ and express the flux $\bfxi$ in terms of $u$. 

\begin{lemma}
	\label{lem:qpcompactness3}
	Let $(u,\bfxi)$, $u \in L^2(\Omega; V_\bftheta)$ and $\bfxi \in L^2(\Omega; {\rm ker}\big( (\sqrt{a_1}\bfnabla_\bftheta)^*\big)$, be a pair which satisfies the identity \eqref{qpfluxidentity}. Then, $u \in U_\bftheta$, see \eqref{limitspace}. That is, for every for $i \in \I^{\bftheta} = \{ i \in \I \, | \, \theta_i = 0 \}$, one has $u = u_i$ on $\Omega \times C_i$, where  $\partial_i u_i \in L^2(\Omega)$ with $u_i \nu_i = 0$ on $\partial \Omega$, for $\bfnu$ the outer unit normal to $\partial \Omega$. Furthermore, 
	
	\begin{equation}
	\label{fluxrep}
	\hspace{-5pt}	\bfxi(x,y)=   \sqrt{a_1}(y)\sum_{i \in \I^{\bftheta}} \partial_{x_i}u_{i}(x) \mathds{1}_{C_i}(y)  [     \bfnabla_y N^{(i)}(y) + \bfe_i ], \qquad \text{ $x \in \Omega, y \in Q_1$.}
	\end{equation}
	
	Here, $N^{(i)}$ solve \eqref{fluxfinal}.
\end{lemma}
The following result immediately follows from the above lemma.
\begin{proposition}
	\label{corhommatrix}
	For every for $i \in \I^{\bftheta}$, one has  
	$$
	\int_{C_i} \sqrt{a}_1(y)\bfxi(x,y) \, {\rm d}y =  a^{\rm hom}_i \partial_{x_i}u_{i}(x) \bfe_i, \qquad   \text{ for almost every $x \in \Omega$.}
	$$
	
	Here, $a^{\rm hom}$ are given by \eqref{ahom}, i.e. 
	$$
	a^{\rm hom}_i =\int_{C_i} a_1(y)  [\partial_{y_i} N^{(i)}(y) + 1] \, {\rm d}y > 0,
	$$
	for $N^{(i)} \in H^1_{\#_i}(C_i) = \{ u \in H^1(C_i) \, | \, u \text{ is $1$-periodic in the variable $y_i$}\}$ is the unique  solution to the cell problem
	\begin{equation}
	\label{1fluxfinal}
	\left\{ \begin{aligned}
	& \int_{C_i} a_1(y) \big[ \bfnabla N^{(i)}(y) +  \bfe_i  \big] \cdot \overline{ \bfnabla \phi } (y) \, {\rm d}y = 0, \qquad \forall \phi \in H^1_{\#_i}(C_i), \\
	&\int_{C_i } N^{(i)} = 0.
	\end{aligned} \right.
	\end{equation}
\end{proposition}
\begin{proof}
	Equation  \eqref{fluxrep} implies 
	$$
	\int_{C_i} \sqrt{a}_1(y)\bfxi(x,y) \, {\rm d}y = \partial_{x_i}u_i(x) \int_{C_i} a_1(y) [ \bfnabla_y N^{(i)}(y) + \bfe_i ] \, {\rm d}y.
	$$
	For each $j \in \{ 1,2,3 \} \backslash \{ i \}$, we set $\phi = y_j$ in \eqref{1fluxfinal} to determine that
	$$
	\int_{C_i} a_1(y) \big[ \bfnabla_y N^{(i)}(y) +  \bfe_i  \big] \cdot \overline{ \bfnabla  y_j} (y) \, {\rm d}y  = \int_{C_i} a_1(y) \big[ \bfnabla_y N^{(i)}(y) +  \bfe_i  \big]  \, {\rm d}y \cdot \bfe_j  = 0.  
	$$
	Hence, it follows that
	$$
	\begin{aligned}
	\int_{C_i} \sqrt{a}_1(y) \bfxi(x,y) \, {\rm d}y &= \partial_{x_i}u_{i}(x) \int_{C_i} a_1(y) [\bfnabla_y N^{(i)}(y) \cdot \bfe_i + 1 ] \bfe_i \, {\rm d}y \\
	& = a^{\rm hom}_i \partial_{x_i}u_{i}(x) \bfe_i,
	\end{aligned}
	$$
	for	almost every $x \in \Omega$. Finally, from \eqref{1fluxfinal}  it follows that
	\begin{flalign*}
	\int_{C_i} a_1(y) [\bfnabla_y N^{(i)}(y) \cdot \bfe_i + 1 ] &  = \int_{C_i} a_1(y) [ \bfnabla_y N^{(i)}(y) + \bfe_i  ] \cdot \bfe_i \\
	& = \int_{C_i} a_1(y) [ \bfnabla_y N^{(i)}(y) + \bfe_i] \cdot \overline{[ \bfnabla_y N^{(i)}(y) + \bfe_i  ] }.
	\end{flalign*}
	Then, the positivity of $a^{\rm hom}_i$ can be seen by the inequality 
	$$
	\int_{C_i} a_1(y) [ \bfnabla_y N^{(i)}(y) + \bfe_i] \cdot \overline{[ \bfnabla_y N^{(i)}(y) + \bfe_i  ] } \ge || a^{-1}_1 ||_{L^\infty(Q_1)}^{-1} \int_{C_i} | \bfnabla_y \big( N^{(i)}(y) + y_i \big) |^2 \, {\rm d}y, 
	$$
	and noting that the right-hand side of this inequality can not be zero for this would contradict the periodicity of $N^{(i)}$ in the $y_i$ variable.
\end{proof}
\begin{proof}[Proof of Lemma \ref{lem:qpcompactness3}]
	As $u \in L^2(\Omega; V_{\bftheta})$, see Lemma \ref{lem:qpcompactness1}, then $u$ is constant in each fibre $C_i$, $i \in \I$. Now if ${\theta}_i \neq 0$ then $u$ is necessarily zero in $C_i$. On the other hand, if ${\theta}_i = 0$, i.e. $i \in \I^{\bftheta}$, then $u(x,y) = u_i(x)$ for $x \in \Omega$, $y \in C_i$. That is, $u = u_i$ on $\Omega \times C_i$ for some $\bfu \in L^2(\Omega, \CC^\bftheta)$, where we recall that $\CC^\bftheta$ is the closed subspace of $\CC^{3}$ spanned by $\{ \bfe_i\}_{i \in \I^\bftheta}$.

	Let us now demonstrate that $u$ belongs to the Hilbert space $U_\bftheta.$  By substituting $u=u_i$ on $\Omega \times C_i$, $i \in \I$, into \eqref{qpfluxidentity}, we deduce that
	\begin{equation}
	\label{qpflux2}
	\begin{aligned}
	\int_{\Omega} \int_Q \bfxi(x,y) \cdot \overline{ \phi(x) \Psi(y) } \, {\rm d}y{\rm d}x = - & \sum_{i \in \I^{\bftheta}} \int_{\Omega} \int_{C_i} \sqrt{a_1}(y) u_i(x) \overline{ \bfnabla_x \phi(x) \cdot \Psi(y) } \, {\rm d}y{\rm d}x, \\
	& \forall \phi \in C^\infty(\Omega), \Psi \in  {\rm ker} \big((\sqrt{a_1} \bfnabla_{\bftheta})^* \big).
	\end{aligned}
	\end{equation}
	For fixed $j \in \I^{\bftheta}$, we will show directly below that there exists a function $\bfPsi^{(j)}{\ker} \big((\sqrt{a_1} \bfnabla_\bftheta)^* \big)$ such that
	\begin{equation}
	\label{nondegeneracy}
	\begin{aligned}
	\int_{C_i} \sqrt{a_1} \overline{\bfPsi^{(j)}} = {\bf 0} \quad i \neq j, & \quad  \text{ and } \quad & \int_{C_j} \sqrt{a_1} \overline{ \bfPsi^{(j)}} 
	= \bfe_j.
	\end{aligned}
	\end{equation}
	Therefore 
	$$
	\sum_{i \in \I^\bftheta}u_i(x) \bfnabla_x \phi \cdot  \int_{C_i} \sqrt{a_1}(y) \overline{ \bfPsi^{(j)}(y) } \, {\rm d}y = u_j(x) \overline{\partial_{x_j}\phi(x)}, \qquad \text{a.e. } x \in \Omega,
	$$
	and consequently substituting $\bfPsi^{(j)}$ into \eqref{qpflux2} gives
	\begin{equation*}
	\begin{aligned}
	\int_{\Omega} \left( \int_Q \bfxi(x,y) \cdot  \overline{\bfPsi^{(j)}} \, {\rm d}y \right)\overline{ \phi(x)}{\rm d}x = -  \int_{\Omega}  u_j(x) \overline{ \partial_{x_j} \phi(x) } \, {\rm d}x, &  \\
	\forall \phi \in & C^\infty(\Omega).
	\end{aligned}
	\end{equation*}
	That is, $\partial_{x_j} u_j(x) = \int_Q \bfxi(x,y) \cdot  \overline{\bfPsi^{(j)}} \, {\rm d}y \in L^2(\Omega)$ and $u_j \nu_j = 0$ on $\partial \Omega$ where $\bfnu$ is the outer unit normal to $\partial \Omega$, i.e. $u \in U_\bftheta$ if \eqref{nondegeneracy} holds.
	
	To show \eqref{nondegeneracy}, we note that under the geometric assumptions on cylinders $C_i$, $i \in \I$, there exists a function 
	\begin{equation}
	\label{specialchi}
	\hspace{-10pt}\begin{aligned}
	\chi_i \in C^\infty(Q) \text{ such that $\chi_i = 1$ on $C_i$,  ${\rm supp}(\chi_i)$ compactly contained in $Q$}  \\
	\text{and ${\rm supp}(\chi_i) \cap \overline{C_k} = \emptyset$ for $k\neq i$}.
	\end{aligned}
	\end{equation}
	Then, for each $j \in \I^\bftheta = \{ i \in \I \, | \, \theta_i =0 \}$,  the function $\bfPsi^{(j)} = \tfrac{1}{|C_j| \sqrt{a_1}} \chi_j \bfe_j$ clearly satisfies \eqref{nondegeneracy}. Furthermore, $\bfPsi^{(j)}$ belongs to ${\ker} \big((\sqrt{a_1} \bfnabla)^* \big)$: Indeed, as  ${\theta}_j =0 $, an element $\phi \in H^1_{\bftheta}(Q)$ is $1$-periodic in the variable $y_j$, and it follows
	$$
	\int_{Q} \tfrac{1}{\sqrt{a_1}} \chi_j\bfe_j \cdot \sqrt{a_1} \bfnabla_y \phi  = \int_{C_j} \partial_{y_j} \phi = 0.
	$$
	Therefore, \eqref{nondegeneracy} holds.

	{ Let us now demonstrate \eqref{fluxrep}}.
	For $i \in \I^{\bftheta}$, and almost every $x \in \Omega$, notice that
	$$
	\begin{aligned}
	\sqrt{a_1}(y){u_i}(x)\sum_{\substack{j \in \{1,2,3\} \\ j \neq i}} & \bfe_j \partial_{x_j} \phi (x) \\
	& = \sqrt{a_1}(y) \bfnabla_y \left( u_i(x) \sum_{\substack{j \in \{1,2,3\} \\ j \neq i}} \partial_{x_j} \phi(x) y_j \right), \quad \text{a.e.}\ y \in C_i,
	\end{aligned}
	$$
	and, by the geometric assumption of the cylinders, we can extend $y_j$ into $Q$ such that the extensions are elements of $H^1_{\bftheta}(Q)$ and equal to zero on $C_j$. Therefore, it follows that
	$$
	\begin{aligned}
	\int_{\Omega}\int_{C_i} &\sqrt{a_1}(y){u_i}(x)  \sum_{\substack{j \in \{1,2,3\} \\ j \neq i}} \partial_{x_j} \phi (x)  \Psi_j(y) \, {\rm dy} {\rm dx}  \\ &= \int_{\Omega}\int_{Q}  \sqrt{a_1}(y) \bfnabla_y \left( u_i(x) \sum_{\substack{j \in \{1,2,3\} \\ j \neq i}} \partial_{x_j} \phi(x) y_j \right) \cdot  \bfPsi(y) \, {\rm dy} {\rm dx} = 0.
	\end{aligned}
	$$
	Consequently, \eqref{qpflux2} takes the form
	$$
	\begin{aligned}
	\int_{\Omega} \int_Q \bfxi(x,y) \cdot \overline{ \phi(x) \bfPsi(y) } \, {\rm d}y{\rm d}x = - & \sum_{i \in \I^{\bftheta}} \int_{\Omega} \int_{C_i} \sqrt{a_1}(y) u_i(x) \overline{ \partial_{x_i} \phi(x)  \Psi_i(y) } \, {\rm d}y{\rm d}x, \\
	& \forall \phi \in C^\infty(\Omega), \bfPsi \in  {\rm ker} \big((\sqrt{a_1} \bfnabla_{\bftheta})^* \big).
	\end{aligned}
	$$
	Integrating by parts above, which is permissible since $\partial_{i} u_i \in L^2(\Omega)$, we deduce that 
	$$
	\begin{aligned}
	\int_{\Omega} \int_Q \bfxi(x,y) \cdot \overline{ \phi(x) \bfPsi(y) } \, {\rm d}y{\rm d}x =  & \sum_{i \in \I^{\bftheta}} \int_{\Omega} \int_{C_i} \sqrt{a_1}(y) \partial_{x_i} u_i(x) \overline{\phi(x)  \Psi_i(y) } \, {\rm d}y{\rm d}x, \\
	& \forall \phi \in C^\infty_0(\Omega), \bfPsi \in  {\rm ker} \big((\sqrt{a_1} \bfnabla_{\bftheta})^* \big).
	\end{aligned}
	$$
	That is, for almost every $x$, $\bfxi(x,\cdot)$ is the projection onto $\ker\big((\sqrt{a_1} \bfnabla_{\bftheta})^* \big)$ of the function 
	$$
	\bfw(x,\cdot) = \sqrt{a_1}(\cdot)\sum_{i \in \I^{\bftheta}} \partial_{x_i}u_{i}(x) \mathds{1}_{C_i}(\cdot) \bfe_i.
	$$
	For each $i \in \I^{\bftheta}$, let $\chi_i$ given by \eqref{specialchi}, and we introduce $\widetilde{N^{i}} \in H^1(Q)$ the extension into $Q$, given by Lemma \ref{extension}, of the function $N^{(i)} \in H^1_{\#_i}(C_i)$ that solves \eqref{1fluxfinal}. It follows that $\sum_{i \in \I^{\bftheta}}  \chi_i \widetilde{N^{(i)}}$ belongs to $H^1_{\bftheta}(Q)$ and
	$$
	\int_Q a_1 \chi_i  [\bfnabla \widetilde{N^{(i)}} + \bfe_i] \cdot \overline{\bfnabla \phi} = \int_{C_i} a_1 [\bfnabla N^{(i)} + \bfe_i ] \cdot \overline{ \bfnabla \phi } = 0
	$$
	for all $\phi \in H^1_{\bftheta}(Q)$. That is, $\sqrt{a_1}\chi_i  [  \bfnabla \widetilde{N^{(i)}} + \bfe_i]$ belongs to  $\ker\big((\sqrt{a_1} \bfnabla_{\bftheta})^* \big)$. Obviously
	\begin{equation*}
	\begin{aligned}
	\bfw(x,y) = \bfw(x,y) + \sqrt{a_1}(y) \sum_{i \in \I^{\bftheta}} \partial_{x_i} u_i (x)  \chi_i(y) \bfnabla_y \widetilde{N^{(i)}}(y)  -  \sqrt{a_1}(y) \sum_{i \in \I^{\bftheta}} \partial_{x_i} u_i (x)\chi_i(y) \bfnabla_y \widetilde{N^{(i)}}(y),
	\end{aligned}
	\end{equation*}
	and
	$$
	\sqrt{a_1}\chi_i  \bfnabla \widetilde{N^{(i)}}  = \sqrt{a_1}\bfnabla  ( \chi_i \widetilde{N^{(i)}}),
	$$
	since $\chi_i$ is piece-wise constant on $C$. Consequently, as $\bfxi(x,\cdot)$ is the projection of $w(x,\cdot)$ onto $\ker\big( (\sqrt{a_1} \bfnabla)^* \big)$, we have
	$$
	\begin{aligned}
	\bfxi(x,y) &= \bfw(x,y) + \sqrt{a_1}(y) \sum_{i \in \I^{\bftheta}} \partial_{x_i} u_i (x)\chi_i(y) \bfnabla_y \widetilde{N^{(i)}}(y)   =   \sqrt{a_1}(y)\sum_{i \in \I^{\bftheta}} \partial_{x_i}u_{i}(x)   [  \mathds{1}_{C_i}(y) \bfe_i +  \chi_i(y)  \bfnabla_y \widetilde{N^{(i)}}(y) ],
	\end{aligned}
	$$
	Hence, \eqref{fluxrep} holds and the proof is complete.
	
\end{proof}

We now conclude with the proof of Theorem \ref{blochhomogenisation}. That is, we show that $u$ solves \eqref{qphomlimit0}. We being by stating that under the assumption that $\Omega$ is star-shaped, standard pull-back and mollification type arguments prove that functions smooth in $x$ are dense in the Hilbert space $U_\bftheta$. Therefore, it is sufficient to show \eqref{qphomlimit0} holds for such test functions $\phi$. Let us take such a $\phi$ and consider the test functions $\phi_\ep(x) = \phi(x,\tfrac{x}{\ep})$, $x \in \Omega$ in \eqref{resolventproblem1}. Utilising the convergences
$$
u_\ep \qp u, \qquad \ep \bfnabla u_\ep \qp \bfnabla_y u, \qquad \sqrt{a_1}(\tfrac{\cdot}{\ep}) \bfnabla u_\ep \qp \bfxi,
$$
we  pass to the limit $\ep \rightarrow 0$ in \eqref{resolventproblem1} to deduce that 
$$
\begin{aligned}
\int_\Omega \int_{C} \sqrt{a_1}(y)\bfxi (x,y) \cdot &\overline{\bfnabla_x \phi}(x,y) \, {\rm d}y{\rm d}x \, +  \int_\Omega \int_{Q_0} a_0(y) \bfnabla_y u(x,y) \cdot \overline{\bfnabla_y \phi}(x,y) \, {\rm d}y{\rm d}x  \\
& + \int_\Omega \int_{Q} u(x,y) \overline{\phi}(x,y) \, {\rm d}y{\rm d}x  = \int_{\Omega}\int_Q  f(x,y) \overline{\phi(x,y)} \, {\rm d}y{\rm d}x.
\end{aligned}
$$
Then, as $\phi = \phi_i$ on $\Omega \times C_i$, with $\phi \neq 0$ only if $i \in \I^\bftheta$,  Proposition \ref{corhommatrix} implies that
$$
\begin{aligned}
\int_\Omega \int_{C} \sqrt{a_1}(y)\bfxi (x,y) \cdot \overline{\bfnabla_x \phi}(x,y) \, {\rm d}y{\rm d}x & = \sum_{i \in \I^\bftheta}  \int_\Omega  \left( \int_{C_i} \sqrt{a_1}(y)\bfxi (x,y)  \, {\rm d}y \right) \cdot \overline{\bfnabla_x \phi_i}(x){\rm d}x \\
& =\sum_{i \in \I^\bftheta}  \int_\Omega  a^{\rm hom}_i \partial_{x_i} u_i(x) \overline{\partial_{x_i} \phi_i}(x){\rm d}x
\end{aligned}
$$
and \eqref{qphomlimit0} follows.

\section{Quasi-periodic two-scale limit operator}
\label{sec:qplimop}
For $\bftheta \in [0,2\pi)^3$, we consider the subspace $H$ which is the closure of $U_\bftheta$ in $L^2(\Omega \times Q)$, i.e.
$$
 \begin{aligned}
H = \big\{ u \in L^2(\Omega \times Q) \, \big| \, u = u_i &\text{ on $\Omega \times C_i$} \\ 
&\text{ for some $\bfu \in L^2(\Omega;\CC^\bftheta)$ } \big\}.
\end{aligned}
$$ 
Indeed, for $f \in H$, we have $f = f_i$ on $\Omega \times C_i$, $i \in \I$, and consequently we deduce that
$$
|| f ||_{L^2(\Omega;L^2(Q_1))}^2 = \sum_{i \in \I^\bftheta} |C_i| || f_i ||_{L^2(\Omega)}^2  ,
$$
and therefore, $H$ is closed in $L^2(\RR^3; L^2( Q))$. It is also straightforward to show that $U_\bftheta$ is dense in $H$. 
Defining on $U_\bftheta$ the form  
$$
Q_\bftheta(u,v) : = \sum_{i \in \I^\bftheta} \int_{\Omega} a^{\rm hom}_i \partial_{x_i} u_i (x) \overline{ \partial_{x_i} v_i (x) } \, {\rm d}x  + \,  \int_{\Omega} \int_{Q_0} a_0(y) \bfnabla_y u(x,y) \cdot \overline{ \bfnabla_y v(x,y)} \, {\rm d}y{\rm d}x,
$$
we find that, since $a^{\rm hom}_i$ are positive constants and $a^{-1}_0 \in L^\infty(Q_0)$, $Q_\bftheta$ is closed when considered as a form on $H$. Setting $A^{\rm hom}_\bftheta : D(A^{\rm hom}_\bftheta) \subset  H \rightarrow H$ to be the unbounded self-adjoint operator generated by $Q_\bftheta$, for $f \in L^2(\RR^3; Q)$ the $\bftheta$-quasi-periodic two-scale homogenised limit problem \eqref{qphomlimit0} takes the form $A^{\rm hom}_\bftheta u = P_\bftheta f$. Here, $P_\bftheta : L^2(\RR^3; L^2(Q)) \rightarrow H$ is the orthogonal projection given by
$$
P_\bftheta f(x,y) = \left\{ \begin{array}{lcr}
\int_{C_i} f(x,y)  \, {\rm d}y, & & y \in C_i, \\[2pt]
f(x,y), & & y \in Q_0.
\end{array} \right. 
$$

An immediate consequence of Theorem \ref{blochhomogenisation} is that for each $\bftheta \in [0,2\pi)^3$, the operator $A_\ep$ strong $\bftheta$-quasi-periodic two-scale resolvent converges to $A^{\rm hom}_\bftheta$, see Section \ref{sec:qpcon} definition \ref{stopcon}. 

\subsection{Spatial operators}
\label{sec:spatialspec}
Introducing the notation
$$
D : = \left( \begin{matrix}
\partial_{x_1} & 0 & 0 \\ 0& \partial_{x_2}  & 0 \\ 0 & 0 &\partial_{x_3}
\end{matrix} \right), \qquad A^{\rm hom} := \left( \begin{matrix}
a^{\rm hom}_1 & 0 & 0 \\ 0&  a^{\rm hom}_2  & 0 \\ 0 & 0 & a^{\rm hom}_3
\end{matrix} \right),
$$
we consider the  Hilbert space
$$
H_\bftheta : = \left\{ \bfu \in L^2(\Omega; \CC^\bftheta)\, |\, D \bfu \in L^2(\Omega),  u_i\nu_i =0 \text{ on } \partial \Omega, i=1,2,3 \right\},$$
endowed with the inner product
$$
 \quad (u,v)_H : = \int_\Omega D \bfu \cdot \overline{D \bfv},
$$
and the following bilinear form defined on $H$:
$$
\alpha_\bftheta(u,v) : = \sum_{i \in \I^\bftheta}\int_{\Omega} a^{\rm hom}_i \partial_{x_i} u_i \cdot \overline{\partial_{x_i} v_i} = \int_\Omega A^{\hom} D \bfu \cdot \overline{D \bfv}, \qquad \bfu,\bfv \in {\rm\it H}_\bftheta.
$$
Note that for $\bftheta$ such that $\I^\bftheta = \{ i \in \I \, | \, \theta_i = 0\} = \emptyset$ then $H_\bftheta$ is zero and for such $\bftheta$ we define our `spatial' operator $A_\bftheta$ to be the zero map. Otherwise, $\alpha_\bftheta$ is a positive form on $H_\bftheta$ and therefore has a positive self-adjoint operator $A_\bftheta$, densely defined in $L^2(\Omega; \CC^\bftheta)$, associated with the form. The space $H$ is compactly embedded\footnote{This follows from an application of Vitali's theorem, which is permissible by noting that since $u_i$ has an $L^2$ weak derivative in the $x_i$-th direction one can use the fundamental theorem of calculus to prove that any bounded sequence in $H$ is 2-equi-integrable.} into $L^2$, and consequently the spatial operator $A_\bftheta$ has compact resolvent and therefore its spectrum is discrete. 
\subsection{Pure Bloch operators}
\label{sec:blochspec}

Consider the space
\begin{equation}
\label{contpartv}
\V_{{\bftheta}} = \{ v \in H^1_{{\bftheta}}(Q) \, | \,  \text{ $v \equiv 0$ on $Q_1$} \},
\end{equation}
which is a closed subspace of $H^1_{{\bftheta}}(Q)$, and therefore is a Hilbert space when  equipped with standard $H^1_{{\bftheta}}(Q)$ norm. Define the sesquilinear form 
$$
\beta_{\bftheta}(u,v): =   \int_{Q_0} a_0(y) \bfnabla_y u(y) \cdot \overline{\bfnabla_y v}(y) \, {\rm d}y , \quad u, v \in \V_{\bftheta}.
$$
Since $a_0$ is positive and bounded on $Q_0$, and elements of $\V_{\bftheta}$ have zero trace on the part of the boundary $\Gamma = \partial Q_1$, then by Poincar\'{e}'s inequality the form $\beta_{\bftheta}$ is (uniformly in ${\bftheta}$) coercive and bounded on $\V_{\bftheta}$, i.e. there exists $c_1$ and $c_2$ independent of ${\bftheta}$ such that
\begin{equation*}
\begin{aligned}
| \beta_{\bftheta}(u,v) | \le c_1 || u ||_{H^1_{\bftheta}}|| v ||_{H^1_{\bftheta}}, \\
\beta_{\bftheta}(u,u) \ge c_2 || u ||_{H^1_{\bftheta}}^2,
\end{aligned}
\end{equation*}
for all $u, v \in \V_{\bftheta}$. This implies that for every $f \in L^2(Q_0)$ there exists a unique solution $u \in \V_{\bftheta}$ such that
$$
\beta_{\bftheta}(u,v)= \int_{Q_0} f (y)  \overline{v}(y) \, {\rm d}y, \qquad \forall v \in \V_{\bftheta}.
$$
Consequently, the unbounded self-adjoint linear operator $B_{\bftheta}$, defined in $L^2(Q_0)$, given by $ B_{\bftheta} u =f$,  is positive and, moreover, by the Rellich embedding theorem has compact resolvent. Therefore the spectrum of $B_{\bftheta}$ is discrete, and we order the eigenvalues in accordance with the min-max principle. These eigenvalues can be shown to be continuous functions of ${\bftheta}$, in fact the following result holds.
\begin{lemma}
	\label{lem:conteigs}
	For each $n \in \NN$, let  $\mu^{(n)}_{\bftheta}$ denote the $n$-th eigenvalue of $B_{\bftheta}$ as ordered according to the min-max principle, i.e.
	\begin{equation}
	\label{minmax}
	\mu^{(n)}_{\bftheta} = \sup_{v_1, \ldots, v_{n-1} \in \V_{\bftheta}} \ \inf_{\substack{v \in \V_{\bftheta},\\ ||v||_{L^2(Q_0)}=1, \\ v \perp v_i, \forall i=1,\ldots,n}} \, \int_{Q_0} a_0 \bfnabla v \cdot \overline{\bfnabla v}, \qquad {\bftheta} \in [0,2\pi)^3,
	\end{equation}
	where $v \perp v_i$ is shorthand for $v$ is orthogonal to $v_i$ in $L^2(Q_0)$. Then, for each $n \in \NN$ the function $\lambda_n({\bftheta}) : = \mu^{(n)}_{\bftheta}$  is Lipschitz continuous, that is there exists a $C_n >0$ such that
	$$
	| \lambda_n({\bftheta}') - \lambda_n({\bftheta}) | \le C_n | {\bftheta}' - {\bftheta}|, \qquad \forall {\bftheta},{\bftheta}' \in [0,2\pi)^3.
	$$
\end{lemma}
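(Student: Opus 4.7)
The chief obstacle is that the form domain $\V_{\bftheta}$ varies with $\bftheta$, so the min-max formula \eqref{minmax} cannot be exploited by directly comparing integrands on a common Hilbert space. I would overcome this via the unitary Bloch substitution $u \mapsto w := e^{-i\bftheta \cdot y} u$, which is an isometry of $L^2(Q_0)$ carrying $\V_{\bftheta}$ onto the $\bftheta$-independent space
$$
\V := \{ w \in H^1_\#(Q) \, | \, w \equiv 0 \text{ on } Q_1 \}.
$$
Since $\bfnabla u = e^{i\bftheta \cdot y}(\bfnabla_y w + i\bftheta w)$, this substitution carries $\beta_{\bftheta}$ to the unitarily equivalent form
$$
\tilde{\beta}_{\bftheta}(w,w) := \int_{Q_0} a_0(y) |\bfnabla_y w + i\bftheta w|^2 \, {\rm d}y
$$
on $\V$. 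The associated operator $\tilde{B}_{\bftheta}$ is isospectral with $B_{\bftheta}$, so $\mu^{(n)}_{\bftheta}$ is also the $n$-th min-max value of $\tilde{\beta}_{\bftheta}$ on the fixed space $\V$.

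Expanding the square, $\bftheta \mapsto \tilde{\beta}_{\bftheta}(w,w)$ is a quadratic polynomial for each fixed $w \in \V$, and Cauchy--Schwarz combined with the boundedness of $a_0$ gives the pointwise estimate
$$
| \tilde{\beta}_{\bftheta'}(w,w) - \tilde{\beta}_{\bftheta}(w,w) | \le C |\bftheta' - \bftheta| \bigl( ||\bfnabla_y w||_{L^2(Q_0)} ||w||_{L^2(Q_0)} + ||w||_{L^2(Q_0)}^2 \bigr),
$$
for all $\bftheta, \bftheta' \in [0,2\pi)^3$, where $C$ depends only on $||a_0||_{L^\infty(Q_0)}$. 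To feed this into min-max I need a uniform-in-$\bftheta$ $H^1$ bound on candidate eigenfunctions. First, choosing any fixed $n$-dimensional subspace $F_0 \subset \V$ yields $\mu^{(n)}_{\bftheta} \le \max_{w \in F_0, ||w||=1} \tilde{\beta}_{\bftheta}(w,w) \le M_n$ uniformly in $\bftheta$. Second, combining this with the uniform coercivity $\beta_{\bftheta}(u,u) \ge c_2 ||u||_{H^1_{\bftheta}}^2$ already established in the text gives $||\bfnabla_y w + i\bftheta w||^2_{L^2(Q_0)} \le M_n/c_2$, and the triangle inequality then delivers $||\bfnabla_y w||_{L^2(Q_0)} \le \sqrt{M_n/c_2} + 2\pi\sqrt{3} =: C_n$ for any $w \in \V$ satisfying $||w||=1$ and $\tilde{\beta}_{\bftheta}(w,w) \le \mu^{(n)}_{\bftheta}$.

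Letting $F_{\bftheta} \subset \V$ be the span of the first $n$ eigenfunctions of $\tilde{B}_{\bftheta}$, the uniform $H^1$ bound applies on $\{ w \in F_{\bftheta} : ||w||=1 \}$, and the pointwise form estimate yields
$$
\max_{w \in F_{\bftheta}, ||w||=1} \tilde{\beta}_{\bftheta'}(w,w) \le \mu^{(n)}_{\bftheta} + C'_n |\bftheta' - \bftheta|.
$$
The min-max principle applied at $\bftheta'$ then gives $\mu^{(n)}_{\bftheta'} \le \mu^{(n)}_{\bftheta} + C'_n |\bftheta' - \bftheta|$; swapping the roles of $\bftheta$ and $\bftheta'$ closes the Lipschitz bound. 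The essential conceptual step is the Bloch substitution, which transfers the $\bftheta$-dependence from the form domain onto the coefficients of the form; thereafter the proof is a textbook Lipschitz perturbation argument, and the only technical care needed is the uniform $H^1$-control on spectral subspaces secured above.
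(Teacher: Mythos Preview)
Your proof is correct and follows essentially the same route as the paper: both arguments use multiplication by $e^{i\eta\cdot y}$ as a unitary to remove the $\bftheta$-dependence from the form domain, then control the resulting perturbation of the quadratic form and invoke min-max. The only cosmetic differences are that the paper maps $\V_{\bftheta}$ directly to $\V_{\bftheta'}$ (rather than pulling both back to the periodic space $\V$) and obtains the uniform upper bound $\lambda_n(\bftheta)\le\mu_n$ by comparison with the Dirichlet operator on $H^1_0(Q_0)\subset\V_{\bftheta}$ instead of a fixed test subspace.
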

The proof relies on an important observation is that the spaces $\V_{\bftheta}$, ${\bftheta} \in [0,2\pi)^3$, are mutually isomorphic. Indeed, if ${\bftheta},{\bftheta}' \in [0,2\pi)^3$ then it is clear that the isometric mapping $\mathcal{U}({\bftheta},{\bftheta}'): L^2(Q) \rightarrow  L^2(Q)$ defined as multiplication by the function $\exp\big( \i ({\bftheta}'-{\bftheta}) \cdot y\big)$ defines an isomorphism between $\V_{{\bftheta}}$ and $\V_{{\bftheta}'}$.
\begin{proof}
	Let $v$ be $L^2(Q_0)$-normalised element of $\V_{{\bftheta}}$ and consider $v' : = \mathcal{U}({\bftheta},{\bftheta}') v =  \exp\big( \i ({\bftheta}'-{\bftheta}) \cdot y \big) v$. Then, $v'$ is an $L^2(Q_0)$-normalised element of $\V_{{\bftheta}'}$ and the following identity
	\begin{flalign*}
	\int_{Q_0} a_0 \bfnabla v' \cdot \overline{\bfnabla v'}  = \int_{Q_0} a_0(y) & \bfnabla v(y) \cdot \overline{ \bfnabla v}(y) \, {\rm d}y + \int_{Q_0} a_0(y)    \bfnabla v(y) \cdot \overline{\i ({\bftheta}' - {\bftheta}) v}(y) \, {\rm d}y  \\
	&+  \int_{Q_0} a_0(y) \i ({\bftheta}' - {\bftheta})  \exp\big( \i ({\bftheta}'-{\bftheta}) y \big) v(y) \cdot \overline{\bfnabla v'}(y) \, {\rm d}y
	\end{flalign*}
	holds. Therefore, one has
	$$
	\begin{aligned}
	&\left\vert \int_{Q_0} a_0 \bfnabla v' \cdot \overline{\bfnabla v'} -  \int_{Q_0} a_0 \bfnabla v \cdot \overline{ \bfnabla v}  \right\vert \\ 
	&\hspace{1cm}\le || a_0 ||^{1/2}_{L^\infty(Q_0)} | {\bftheta}'- {\bftheta}| \left[ \left( \int_{Q_0} a_0 \bfnabla v \cdot \bfnabla v \right)^{1/2} + \left( \int_{Q_0} a_0 \bfnabla v' \cdot \bfnabla v' \right)^{1/2}  \right].
	\end{aligned}
	$$
	Consequently, as the isometric mapping $\mathcal{U}({\bftheta},{\bftheta}'): L^2(Q_0) \rightarrow L^2(Q_0)$ is an isomorphism between $\V_{\bftheta}$ and $\V_{{\bftheta}'}$, the above inequality and the min-max formula \eqref{minmax} implies that
	\begin{equation}\label{lipbound}
	| \lambda_n({\bftheta}') - \lambda_n({\bftheta}) | \le || a_0 ||^{1/2}_{L^\infty(Q_0)}| {\bftheta}'- {\bftheta}| \big( \lambda_n({\bftheta}') + \lambda_n({\bftheta}) \big).
	\end{equation}
	Now, if we consider the self-adjoint Dirichlet operator in $L^2(Q_0)$ associated with the form
	$$
	\beta_D(u,v) : = \int_{Q_0} a_0 \bfnabla u \cdot \bfnabla v, \qquad \forall u,v \in H^1_0(Q_0),
	$$
	then, since $H^1_0(Q_0)$ is embedded in $\V_{\bftheta}$ for all ${\bftheta}$, one has
	$$
	\lambda_n({\bftheta}) \le \mu_n : =\sup_{v_1, \ldots, v_{n-1} \in H^1_0(Q_0)} \ \inf_{\substack{v \in H^1_0(Q_0),\\ ||v||_{L^2(Q_0)}=1, \\ v \perp v_i, \forall i=1,\ldots,n}} \, \int_{Q_0} a_0 \bfnabla v \cdot \bfnabla v , \quad \forall {\bftheta}\in [0,2\pi)^3. 
	$$
	Here $\mu_n$ is the $n$-th eigenvalue\footnote{The spectrum of $B_D$ is discrete, which again is a consequence of the Rellich theorem.} of the operator $B_D$, defined in a similar manner as $B_{\bftheta}$ above. Hence, we deduce from \eqref{lipbound} that $\lambda_n({\bftheta})$ is Lipschitz continuous  with a Lipschitz constant bounded from above by $2 || a_0 ||^{1/2}_{L^\infty(Q_0)} \mu_n.$
\end{proof}
\section{Quasi-periodic two-scale limit spectrum}
\label{sec:limspec}
In this section we study the spectrum
$$
\bigcup_{\bftheta \in [0,2\pi)^3} \sigma(A^{\rm hom}_\bftheta).
$$
In particular we shall characterise the spectrum in terms of the spatial and pure Bloch operators introduced in Section \ref{sec:qplimop}. This leads to an appropriate analogue of the Zhikov $\beta$ function, cf. \cite{Zh1}.

Let us fix $\bftheta \in [0,2\pi)^3$ and suppose that $\lambda_{\bftheta}$ is in the spectrum of $A_{\bftheta}^{\rm hom }$. Then, there exists an eigenfunction $u_{\bftheta} \in V_{\bftheta}$ that solves the spectral problem
\begin{equation}\label{twoscaleqpspectral}\hspace{-2pt} \left\{ 
\begin{array}{lcr}
- {\rm div}_y\big( a_0 ( y ) \bfnabla_y u_{\bftheta}(x,y) \big) = \lambda_{\bftheta}  u_{\bftheta}(x,y), & & \  \text{$x \in \Omega$, $y \in Q_0$,}   \\[2pt]
\hspace{10pt} u_{\bftheta}(x,y) = u_i(x), \hspace{2cm}   & & x \in \Omega ,\  y \in \overline{C_i,} \\[2pt]
\text{where $u_i \equiv 0$ if ${\theta}_i \neq 0$ or otherwise solves} & &\\[2pt]
-  a^{\rm hom}_{i} \partial^2_{x_i} u_i(x) + \T_i(u_{\bftheta})(x) = \lambda | C_i | u_i(x), &  & \ \text{for $x \in \Omega$.}
\end{array} \right.
\end{equation} 
Here, we recall that
$$
\begin{aligned}
\T_i(u_{\bftheta})(x) = \int_{\Gamma_i} a_0(y) \bfnabla_y u_{\bftheta}(x,y) \cdot \bfn(y) \, {\rm d}S(y).
\end{aligned}
$$

There are two subcases to study: when ${\bftheta} \in \cup_{i \in \I} \Pi_i$, for $\Pi_i : = \{ {\bftheta} \in [0,2\pi^2)^3 \, | \, \bftheta \cdot \bfe_i = 0 \}$, and ${\bftheta} \in [0,2\pi)^3 \backslash \big( \cup_{i \in \I} \Pi_i \big)$.

\hspace{10pt} {\it Pure Bloch spectrum.} If  ${\bftheta} \in [0,2\pi)^3 \backslash \big( \cup_{i \in \I} \Pi_i \big)$, then $\lambda_\bftheta, u_{\bftheta}$ solves the problem 
\begin{equation}\label{twoscaleqpspectrale2}\hspace{-2pt} \left\{ 
\begin{array}{lcr}
- {\rm div}_y\big( a_0 ( y ) \bfnabla_y u_{\bftheta}(x,y) \big) = \lambda_{\bftheta}  u_{\bftheta}(x,y), & & \  \text{$x \in \Omega$, $y \in Q_0$,}   \\[2pt]
\hspace{10pt} u_{\bftheta}(x,y) = 0, \hspace{2cm}   & & x \in \Omega ,\  y \in \Gamma. 
\end{array} \right.
\end{equation} 
Therefore, setting $u_{\bftheta}(x,y) = \phi(x) v_{\bftheta}(y)$ for a sufficiently arbitrary $\phi$, we find that $v_{\bftheta}$ solves
\begin{equation}\label{twoscaleqpspectrale3}\hspace{-2pt} \left\{ 
\begin{array}{lcr}
- {\rm div}_y\big( a_0 ( y ) \bfnabla_y v_{\bftheta}(y) \big) = \lambda_{\bftheta}  v_{\bftheta}(y), & & \  \text{$y \in Q_0$,}   \\[2pt]
\hspace{10pt} v_{\bftheta}(y) = 0, \hspace{2cm}   & &  y \in \Gamma. 
\end{array} \right.
\end{equation} 
Therefore, the spectrum of $A_{\bftheta}^{\rm hom }$ for ${\bftheta} \in [0,2\pi)^3 \backslash \big( \cup_{i\in \I} \Pi_i \big)$ consists of eigenvalues of infinite multiplicity, and these eigenvalues coincide with the  eigenvalues the pure Bloch operator $B_{\bftheta}$ introduced in  Section \ref{sec:blochspec}.
Lemma \ref{lem:conteigs} implies that these eigenvalues are continuous with respect to ${\bftheta}$, and by continuously extending ${\bftheta}$ from $[0,2\pi)^3 \backslash \big( \cup_{i \in \I} \Pi_i \big)$ to $[0,2\pi)^3$  we deduce that
$$
\sigma( A^{\rm hom}) \supset \bigcup_{{\bftheta} \in [0,2\pi)^3} \sigma(B_{\bftheta}).
$$
It is for this reason that we call $ \bigcup_{{\bftheta} \in [0,2\pi)^3} \sigma(B_{\bftheta})$ the pure Bloch spectrum of $A^{\rm hom}_{\bftheta}$.

\hspace{10pt}{\it Spatial spectrum.} Let us now suppose that ${\bftheta} \in \cup_{i\in \I} \Pi_i $ and $\lambda_\bftheta \in \sigma(A_\bftheta)$ is not a pure Bloch eigenvalue, i.e. $\lambda_\bftheta \notin  \bigcup_{{\bftheta} \in [0,2\pi)^3} \sigma(B_{\bftheta})$. Introducing, for $i  \in \I^\bftheta$ the functions  $b^{(i)}_{\bftheta} \in H^1_{\bftheta}(Q)$ that satisfy
\begin{equation}
\label{microosc:b}
\left\{ \begin{aligned}
- {\rm div}_y\big( a_0 ( y ) \bfnabla_y b^{(i)}_{\bftheta}(y) \big) = 0, & & \  \text{$y \in Q_0$,}   \\[2pt]
\hspace{10pt} b^{(i)}_{\bftheta}(y) = \delta_{ij}, \hspace{2cm}   & &  y \in \overline{C_j}, \ j=1,2,3,
\end{aligned} \right.
\end{equation} 
we represent $u_{\bftheta}$ as follows
$$
u_{\bftheta}(x,y) = \sum_{i \in \I^{\bftheta}} u_i(x) b^{(i)}_{\bftheta}(y) + v_{\bftheta}(x,y),
$$
and substitute this representation into \eqref{twoscaleqpspectral} to deduce that $v_{\bftheta}(x,y) \in \V_{\bftheta}$, see \eqref{contpartv}, solves
\begin{equation}
\label{microosc:v}
\begin{aligned}
- {\rm div}_y\big( a_0 ( y ) \bfnabla_y v_{\bftheta}(x,y) \big) - \lambda_\bftheta v_{\bftheta}(x,y) = \lambda_\bftheta \sum_{i \in \I^{\bftheta}}  u_i(x) b^{(i)}_{\bftheta}(y), & & \  x \in \Omega, \text{$y \in Q_0$.}
\end{aligned} 
\end{equation} 
Denoting respectively by $\mu^{(m)}_{\bftheta}$ and $v^{(m)}_{\bftheta}$ the $m$-th eigenvalue and orthonormal eigenfunction of $B_{\bftheta}$, we perform a spectral decomposition of $v_\bftheta$ and $b^{(i)}_\bftheta$ to conclude that
$$
\begin{aligned}
v_\bftheta(x,y) = \sum_{m \in \NN} c_m(\bftheta,x)  v^{(m)}_{\bftheta}(y), &  & b^{(i)}_\bftheta(y) = \sum_{m \in \NN} b^{(i)}_m(\bftheta)v^{(m)}_{\bftheta}(y),
\end{aligned}
$$
for some $c_m(\bftheta,x)$ and constants
$
b^{(i)}_m(\bftheta) = \int_{Q_0} b^{(i)}_\bftheta \overline{v_\bftheta^{(m)}}.
$ Substituting the spectral representations into \eqref{microosc:v} gives
$$
\begin{aligned}
c_m(\bftheta,x) = \tfrac{\lambda_\bftheta}{\mu^{(m)}_{\bftheta} - \lambda_\bftheta} \sum_{i \in \I^{\bftheta}}u_i(x) b^{(i)}_m(\bftheta).
\end{aligned}
$$
Therefore, $u_{\bftheta}$ admits the form
$$
u_{\bftheta}(x,y) =\sum_{i \in \I^{\bftheta}} \sum_{m \in \NN} \Big( \tfrac{\mu^{(m)}_{\bftheta}}{\mu^{(m)}_{\bftheta} -\lambda_\bftheta} \Big) u_i(x) b^{(i)}_m(\bftheta) v^{(m)}_{\bftheta}(y), \qquad x \in \Omega, y \in Q_0.
$$
Consequently, we calculate
$$
\T_j(u_{\bftheta})(x) =\sum_{i \in \I^{\bftheta}} \sum_{m \in \NN} \Big( \tfrac{\mu^{(m)}_{\bftheta}}{\mu^{(m)}_{\bftheta} -\lambda_\bftheta} \Big) u_i(x) b^{(i)}_m(\bftheta) \int_{\Gamma_j} a_0\bfnabla_y v^{(m)}_{\bftheta}(y) \cdot \bfn(y) \, {\rm d}S(y).
$$
Recalling that $b^{(i)}_\bftheta$ solves \eqref{microosc:b}, $v_\bftheta^{(m)}$ solves \eqref{twoscaleqpspectrale3}, and utilising Green's identity, we deduce that
$$
\begin{aligned}
 \int_{\Gamma_j} a_0\bfnabla_y v^{(m)}_{\bftheta}(y) \cdot \bfn(y) \, {\rm d}S(y) & =  \int_{\Gamma} a_0\bfnabla_y v^{(m)}_{\bftheta}(y) \cdot \bfn(y) \overline{b^{(j)}_\bftheta}(y) \, {\rm d}S(y) 
=-\mu^{(m)}_{\bftheta} \int_{Q_0 }v^{(m)}_{\bftheta} \overline{b^{(j)}_\bftheta} \\
&=-\mu^{(m)}_{\bftheta} \overline{b^{(j)}_m}(\bftheta). 
\end{aligned}
$$
Therefore 
$$
\T_j(u_{\bftheta})(x) = -\sum_{i \in \I^{\bftheta}} \sum_{m \in \NN} \Big( \tfrac{ |\mu^{(m)}_{\bftheta}|^2}{\mu^{(m)}_{\bftheta} -\lambda_\bftheta} \Big) u_i(x) b^{(i)}_m(\bftheta) \overline{b^{(j)}_m(\bftheta)},
$$
and for each $i \in \I^{\bftheta}$, $u_i$ solves the problem
$$
- a^{\rm hom}_i \partial^2_{x_i} u_i(x) = \sum_{ \substack{ j \in \I^{\bftheta}  }} \beta^{(ij)}_{\bftheta}(\lambda_\bftheta) u_j(x) \qquad x \in \Omega, \qquad u_i \nu_i = 0 \text{ on $\partial \Omega$},
$$
for  
\begin{equation}
\label{matrixbeta}
\beta^{(ij)}_{\bftheta}(\lambda)  =\lambda | C_i | \delta_{ij} +  \sum_{m \in \NN} \Big( \tfrac{ |\mu^{(m)}_{\bftheta}|^2}{\mu^{(m)}_{\bftheta} -\lambda} \Big) b^{(j)}_m(\bftheta) \overline{b^{(i)}_m(\bftheta)}, \qquad \lambda \in \RR.
\end{equation}
Hence, we have demonstrated the following.
\begin{proposition}
\label{prop:limitspecchar}
	The spectrum of $\bigcup\limits_{\bftheta \in [0,2\pi)^3} \sigma(A_\bftheta^{\rm hom})$ is the union of the following two sets:
	\begin{itemize}
		\item{The pure Bloch spectrum: 
			$$
			\bigcup_{\bftheta \in [0,2\pi)^3} \sigma(B_\bftheta) = \sum_{m \in \NN} \left[ \min_{\bftheta \in [0,2\pi)^3} \big(\mu^{(m)}_\bftheta\big), \max_{\bftheta \in [0,2\pi)^3} \big( \mu^{(m)}_\bftheta \big) \right],
			$$
			where $\mu^{(m)}_\bftheta$ are the eigenvalues of $B_\bftheta$ ordered according to the min-max principle.}
		\item{The spatial spectrum: $\{ \lambda \in \ [0,\infty) \, | \,$ $\gamma(\bftheta) (\lambda) \in \sigma(A_\bftheta)\}$, where $A_\bftheta$ is an operator with compact resolvent.
			Here $\gamma : \RR^3 \rightarrow \mathbb{S}^3$ is for each $\bftheta$ a (possibly) sign-indefinite symmetric matrix defined by setting for $i \notin \I^\bftheta$, $\gamma_{ij}(\bftheta) =0$ for all $j$, and $\gamma_{ij}(\bftheta) = \beta^{(ij)}_\bftheta $ otherwise.
		}
	\end{itemize}
\end{proposition}

\section*{Acknowledgements}
This work was performed under the financial support of the Engineering and Physical Sciences Research Council Grant EP/M017281/1: ``Operator asymptotics, a new approach to length-scale interactions in metamaterials."

\end{document}